  \def\apxmark{}
  \def\apxmark{\,*$\!$}
\def\subqed{{\color{gray}\tiny\qed}}
\newcommand{\ca}[1]{\mathcal{#1}}
\newcommand{\mx}[1]{\boldsymbol{#1}}
\newcommand{\mpp}{\mathcal{P}}
\newcommand{\mcc}{\mathcal{C}}
\newcommand{\baa}{\boldsymbol{A}}
\newcommand{\bbb}{\boldsymbol{B}}
\newcommand{\bvi}{\boldsymbol{V_i}}
\newcommand{\bvj}{\boldsymbol{V_j}}
\newcommand{\bcij}{\boldsymbol{C_{ij}}}
\newcommand{\bss}{\boldsymbol{S}}
\newcommand{\bbij}{\boldsymbol{B_{ij}}}
\newcommand{\boo}{\boldsymbol{O}}
\newcommand{\bci}{\boldsymbol{C_i}}
\newcommand{\cupdot}{\mathbin{\mathaccent\cdot\cup}}
\begin{document}
\title{Twin-width and Transductions of Proper $k$-Mixed-Thin Graphs%
	\thanks{Supported by the {Czech Science Foundation}, project no.~{20-04567S}.}}
\author{Jakub Balab\'an\orcidID{0000-0002-2475-8938} \and
Petr Hlin\v{e}n\'y\orcidID{0000-0003-2125-1514} \and
Jan~Jedelsk\'y\orcidID{0000-0001-9585-2553}}
\authorrunning{J. Balab\'an et al.}
\institute{Faculty of Informatics, Masaryk University,\\Botanick\'a 68a, Brno, Czech Republic}
\maketitle              
\begin{abstract}
The new graph parameter twin-width, introduced by Bonnet, Kim, Thomass\'e and Watrigant in 2020, 
allows for an FPT algorithm for testing all FO properties of graphs.
This makes classes of efficiently bounded twin-width attractive from the algorithmic point of view.
In particular, classes of efficiently bounded twin-width include proper interval graphs, and (as digraphs) posets of width~$k$.
Inspired by an existing generalization of interval graphs into so-called $k$-thin graphs, 
we define a new class of {\em proper $k$-mixed-thin graphs} which largely generalizes proper interval graphs.
We prove that proper $k$-mixed-thin graphs have twin-width linear in~$k$, and that 
a slight subclass of $k$-mixed-thin graphs is transduction-equivalent to posets of width~$k'$
such that there is a quadratic-polynomial relation between $k$ and~$k'$.
In addition to that, we also give an abstract overview of the so-called red potential method
which we use to prove our twin-width bounds.

\keywords{twin-width \and red potential method \and proper interval graph \and proper mixed-thin graph \and transduction equivalence.}
\end{abstract}

\section{Introduction}\label{sec:intro}

The notion of twin-width (of simple graphs, digraphs, or matrices) was introduced quite recently, in 2020, by Bonnet, Kim, Thomass{\'{e}}
and Watrigant~\cite{DBLP:conf/focs/Bonnet0TW20}, and yet has already found many very interesting applications.
These applications span from efficient parameterized algorithms and algorithmic metatheorems, through finite model theory, to classical combinatorial questions.
See also the (still growing) series of follow-up papers
\cite{DBLP:journals/jacm/BonnetKTW22,DBLP:conf/icalp/BergeBD22,DBLP:conf/soda/BonnetGKTW21,DBLP:conf/icalp/BonnetG0TW21,DBLP:conf/stoc/BonnetGMSTT22,DBLP:conf/soda/BonnetKRT22,DBLP:journals/corr/abs-2204-00722,DBLP:journals/corr/abs-2102-06880}.

We leave formal definitions for the next section.
In simple graphs, twin-width measures how diverse the neighbourhoods of the graph vertices are.
Specially, if two vertices $x$ and $y$ in a graph $G$ have the same neighbours in $V(G) \setminus \{x,y\}$, then $x$ and $y$ are called {\em twins}.
E.g., {\em cographs} (the graphs which can be built from singleton vertices by repeated operations of a disjoint union and taking the complement)
have the lowest possible value of twin-width,~$0$, which means that they can be brought down to a single vertex by successively identifying twins.
Hence the name, {\em twin-width}, for the parameter, and the term {\em contraction sequence} referring to the described identification process of vertices.

Twin-width is particularly useful in the algorithmic metatheorem area.
Namely, Bonnet et al.~\cite{DBLP:journals/jacm/BonnetKTW22} proved that classes of binary relational structures 
(such as graphs and digraphs) of bounded twin-width have efficient first-order (FO) model checking algorithms, given a witness of the boundedness (a ``good'' contraction sequence).
In one of the previous studies on algorithmic metatheorems for dense structures, Gajarsk\'y et al.~\cite{DBLP:conf/focs/GajarskyHLOORS15} proved
that posets of bounded width (the {\em width of a poset} is the maximum size of an antichain) admit efficient FO model checking algorithms.
In this regard, \cite{DBLP:journals/jacm/BonnetKTW22} generalizes \cite{DBLP:conf/focs/GajarskyHLOORS15} since posets of bounded width have bounded twin-width.
The original proof of the latter in~\cite{DBLP:journals/jacm/BonnetKTW22} was indirect (via so-called mixed minors, but this word `mixed' has nothing to do with our `mixed-thin') and giving a loose bound, 
and Balab\'an and Hlin\v{e}n\'y~\cite{DBLP:conf/iwpec/BalabanH21} have recently proved a straightforward linear upper bound (with an efficient construction of a contraction sequence) on the twin-width of posets in terms of width.

Another well-known class of graphs with bounded twin-width are proper interval graphs, also known as unit interval graphs (in contrast, the twin-width of general interval graphs is known to be unbounded \cite{DBLP:conf/soda/BonnetGKTW21}). In this paper, we significantly generalize proper interval graphs to a new class which is still of bounded twin-width. We call this new class {\em proper $k$-mixed-thin graphs} (where $k\in \mathbb{N}^+$, see Definition~\ref{def:mixThin}) since it is related
to previous generalizations of interval graphs to thin~\cite{DBLP:journals/orl/ManninoORC07} and proper thin~\cite{DBLP:journals/dam/BonomoE19} graphs.
We show some basic properties and relations of our new class, 
and prove that the twin-width of any proper $k$-mixed-thin graph is at most linear in~$k$.
Moreover, a contraction sequence can be constructed efficiently if a proper mixed-thin representation of the graph is given.
This result brings new possibilities of proving boundedness of twin-width for various graph classes in a direct and efficient way.
The aspect of an efficient construction of the relevant contraction sequence is quite important from the algorithmic point of view;
the exact twin-width is \textsc{NP}-hard to determine~\cite{DBLP:conf/icalp/BergeBD22}, and no efficient approximations of it are known in general.

The linear bound on twin-width of proper $k$-mixed-thin graphs is obtained using a natural combinatorial argument. 
Informally, we choose a subset of all possible contractions, we assign a value to each of them measuring how ``bad'' choice it is,
and then we argue that the average of these values is always ``good enough''.
Since there obviously is a choice of a contraction with the assigned value at most equal to this average, at each step, 
we can apply such contraction as the next step of our constructed contraction sequence. 
The same proof technique has been used already in \cite{DBLP:conf/iwpec/BalabanH21} to bound the twin-width of posets, 
and so it is natural to ask how far can it be generalized to efficiently bound the twin-width of other classes of not only graphs.
That is why we call this technique the {\em red potential method}, where the words {\em red potential} denote the value assigned as above
to each considered contraction.
We formulate and study this generalized concept later in the paper.

The second point of interest of our research stems from the following deep result of \cite{DBLP:journals/jacm/BonnetKTW22}:
the property of a class to have bounded twin-width is preserved under {\em FO transductions} which are, roughly explaining, 
expressions (or logical {\em interpretations}) of another graph in a given graph using formulae of FO logic with help of arbitrary additional parameters in the form of vertex labels.
E.g., to prove that the class of interval graphs has unbounded twin-width, it suffices to show that they interpret in FO all graphs.
In this regard we prove that a subclass of our new class, of the {\em inversion-free} proper $k$-mixed-thin graphs, is transduction-equivalent to the class of posets of width~$k'$ (with a quadratic dependence between $k$ and~$k'$).
So, our results can be seen as a generalization of \cite{DBLP:conf/iwpec/BalabanH21} and, importantly for possible applications, they target undirected graphs instead of special digraphs in the poset case.

\vspace*{-1ex}
\subsection{Outline of the paper}
\begin{itemize}\parskip2pt
\item[$\diamond$] In Section \ref{sec:prel} we give an overview of the necessary concepts from graph theory and FO logic;
	namely about intersection graphs, the twin-width and its basic properties, and FO transductions.

\item[$\diamond$] In Section \ref{sec:def} we define the new classes of $k$-mixed-thin and proper $k$-mixed-thin graphs, and their inversion-free subclasses
	(Definition~\ref{def:mixThin}).
\\	We also state the following results;
\begin{itemize}
\item[--]comparing proper $k$-mixed-thin to $k$-thin graphs (Propositions \ref{prop:oldthin} and \ref{prop:easymix}),
\item[--]proving that multidimensional\ full grids (i.e., strong products of paths), and 
	the proper intersection graphs of subpaths in a subdivision of a given graph, are proper $k$-mixed-thin for suitable~$k$
	(Theorems \ref{thm:multidimgrid} and \ref{thm:pathsInGraphs}).
\end{itemize}

\item[$\diamond$] Section \ref{sec:tww} brings the first core result composed of
\begin{itemize}
\item[--] an efficient constructive proof that the class of proper $k$-mixed thin graphs has twin-width at most~$9k$ (Theorem~\ref{thm:propermix-to-tww}),
	and an example in which this bound cannot be improved below a linear function (Proposition~\ref{prop:propermix-to-tww-lower}),
\item[--] followed by a consequence that FO properties on these graphs can be tested in FPT, given the representation (Corollary~\ref{cor:propermix-FOmc}).
\end{itemize}

\item[$\diamond$] Section \ref{sec:trans} then states the second core result -- the transduction equivalence.
\begin{itemize}
\item[--] The class of inversion-free proper $k$-mixed-thin graphs is a transduction of the class of posets of width at most $5\cdot \binom{k}{2} + 2k$ (Theorem~\ref{theorem:from-posets}), and
\item[--] the class of posets of width at most $k$ is a transduction of the class~of inversion-free proper $(2k+1)$-mixed-thin unordered graphs (Theorem~\ref{theorem:to-posets}).
\end{itemize}

\item[$\diamond$] In Section~\ref{sec:red-potential} we give a general overview of the red potential method generalizing the arguments in Section~\ref{sec:tww},
and study the question of how far this method can be extended to cover other classes of bounded twin-width.

\item[$\diamond$] We conclude our findings, state some open questions and outline future research directions in the final Section~\ref{sec:conclu}.
\end{itemize}

\ifx\proof\inlineproof\else
\noindent
We leave proofs of the \apxmark~-marked statements for the full preprint~\cite{DBLP:journals/corr/abs-2202-12536}.
\fi

\section{Preliminaries and Formal Definitions}\label{sec:prel}

A (simple) \emph{graph} is a pair $G=(V,E)$ where $V=V(G)$ is the \emph{finite} vertex set and $E=E(G)$ is the edge set -- a set of unordered pairs of vertices $\{u,v\}$, shortly~$uv$.
For a set $Z\subseteq V(G)$, we denote by $G[Z]$ the subgraph of $G$ induced on the vertices of~$Z$.
A \emph{subdivision} of an edge $uv$ of a graph $G$ is the operation of replacing $uv$ with a new vertex $x$ and two new edges $ux$ and~$xv$.

A {\em poset} is a pair $P=(X,\le)$ where the binary relation $\leq$ is an ordering on~$X$.
We represent posets also as special digraphs (directed graphs with ordered edges).
The \textit{width of a poset} $P$ is the maximum size of an antichain in $P$, i.e., the maximum size of an independent set in the digraph $P$.
We say that $(x, y) \in X^2$ is a \textit{cover pair} if $x \lneq y$ and there is no $z \in X$ such that $x \lneq z \lneq y$.

\vspace*{-1ex}
\subsection{Intersection graphs}
\vspace*{-1ex}

The \emph{intersection graph} $G$ of a finite collection of sets $\{S_1, \dots, S_n\}$ is a graph in which each set
$S_i$ is associated with a vertex $v_i \in V(G)$ (then $S_i$ is the {\em representative} of~$v_i$), 
and each pair $v_i,v_j$ of vertices is joined by an edge if and only if the corresponding sets have a non-empty intersection, i.e.~$v_iv_j \in E(G) \iff S_i \cap S_j \neq \emptyset$.
We say that an intersection graph $G$ is {\em proper} if $G$ is the intersection graph of $\{S_1, \dots, S_n\}$ such that $S_i\not\subseteq S_j$ for all $i\not=j\in\{1,\ldots,n\}$.

A nice example of intersection graphs are {\em interval graphs}, which are the intersection graphs of intervals on the real line.
More generally, for a fixed graph~$H$, if $H'$ is a subdivision of $H$, then an {\em$H$-graph} is the intersection graph of the vertex sets of connected subgraphs of $H'$.
Such an intersection representation is also called an $H$-representation.
For instance, interval graphs coincide with $K_2$-graphs.
We can speak also about proper interval or proper $H$-graphs.

\vspace*{-1ex}
\subsection{Twin-width}\label{sub:twwdef}
\vspace*{-1ex}

We present the definition of twin-width focusing on matrices, as taken from~\cite[Section~5]{DBLP:conf/focs/Bonnet0TW20}. 
Later in the paper, we will restrict ourselves only to the {\em symmetric} twin-width because the more general version is not relevant for graphs.

Let $\baa$ be a square matrix with entries from a finite set (here $\{0,1, r\}$ for graphs) and let $X$ be the set indexing both rows and columns of~$\baa$.
The entry $r$ is called a {\em red entry}, and the \textit{red number} of a matrix $\baa$ is the maximum number of red entries over all columns and rows in $\baa$.

\begin{figure}[tb]
\begin{subfigure}
\centering\includegraphics[page=1]{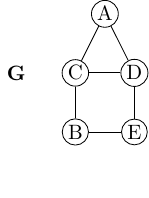}
\end{subfigure}
\hfill
\begin{subfigure}
        \centering\includegraphics[page=1]{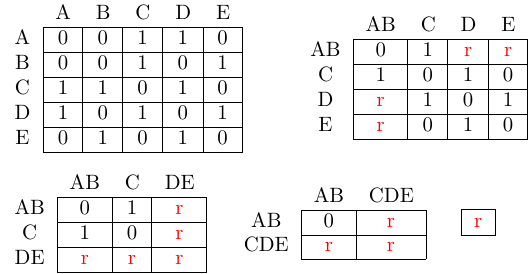}
\end{subfigure}
\caption{An example of a graph $G$ (left), and a symmetric contraction sequence of its adjacency matrix (right), which certifies that the symmetric twin-width of the adjacency matrix of $G$ is at most 3, and so is the twin-width of $G$.}
        \label{figure:tww}
\end{figure}

\emph{Contraction} of the rows (resp. columns) $k$ and $\ell$ results in the matrix obtained by deleting $\ell$, 
and replacing entries of $k$ by $r$ whenever they differ from the corresponding entries in $\ell$.
Informally, if $\mx A$ is the adjacency matrix of a graph, the red entries (``errors'') in a contraction of rows $k$ and $\ell$ record where the graph neighbourhoods of the vertices $k$ and $\ell$ differ.

A sequence of matrices $\baa=\baa_n, \ldots, \baa_1$ is a \textit{contraction sequence} of the matrix $\baa$, whenever $\baa_1$ is $(1 \times 1)$ matrix and for all $1 \le i < n$, the matrix $\baa_i$ is a contraction of the matrix $\baa_{i+1}$.
We call such sequence a \textit{$d$-contraction sequence} if the red number of any matrix contained in it is at most $d$.
A contraction sequence is \textit{symmetric} if every contraction of a pair of rows (resp. columns) is immediately followed by a contraction of the corresponding pair of columns (resp. rows).

The \textit{twin-width} of a matrix $\baa$ is the minimum integer $d$, such that there exists a $d$-contraction sequence of $\baa$.
The \textit{symmetric twin-width} of a matrix $\baa$ is defined analogously, requiring that the contraction sequence is symmetric, and we only count the red number after both symmetric row and column contractions are performed.
See Figure~\ref{figure:tww}.
The {\em twin-width of a graph $G$} is then the symmetric twin-width of its adjacency matrix $\mx A(G)$.%
\footnote{Note that one can also define the ``natural'' twin-width of graphs which, informally, ignores the red entries on the main diagonal (as there are no loops in a simple graph).
 The natural twin-width is never larger, but possibly by one lower, than the symmetric matrix twin-width. For instance, for the sequence in Figure~\ref{figure:tww}, the natural twin-width would be at most~$2$.}

We call an ordering $\preceq$ of the rows of a matrix $\baa$ a \textit{$k$-twin-ordering} if there is a symmetric $k$-contraction sequence of $\baa$ which contracts only rows consecutive in $\preceq$.

\subsection{FO logic and transductions}
\vspace*{-1ex}

A \textit{relational signature} $\Sigma$ is a finite set of relational symbols $R_i$, each with associated arity $r_i$. A \textit{relational structure} $\baa$ with signature $\Sigma$ (or shortly a $\Sigma$-structure) is defined by a \textit{domain} $A$ and relations $R_i(\baa) \subseteq A^{r_i}$ for each relational symbol $R_i \in \Sigma$ (the relations \textit{interpret} the relational symbols). For example, graphs can be viewed as relational structures with the set of vertices as the domain and a single relational symbol $E$ with arity 2 in the relational signature.

Let $\Sigma$ and $\Gamma$ be relational signatures. An \textit{interpretation} $I$ of $\Gamma$-structures in $\Sigma$-structures is a function from $\Sigma$-structures to $\Gamma$-structures defined by a formula $\varphi_0(x)$ and a formula $\varphi_R(x_1,\mathellipsis,x_k)$ for each relational symbol $R \in \Gamma$ with arity $k$ (these formulae may use the relational symbols of $\Sigma$).
Given a $\Sigma$-structure $\baa$, $I(\baa)$ is a $\Gamma$-structure whose domain $B$ contains all elements $a \in A$ such that $\varphi_0(a)$ holds in $\baa$, and in which every relational symbol $R \in \Gamma$ of arity $k$ is interpreted as the set of tuples $(a_1,\mathellipsis,a_k) \in B^k$ satisfying $\varphi_R(a_1,\mathellipsis,a_k)$ in $\baa$.

A transduction $T$ from $\Sigma$-structures to $\Gamma$-structures is defined by an interpretation $I_T$ of $\Gamma$-structures in $\Sigma^+$-structures where $\Sigma^+$ is $\Sigma$ extended by a finite number of unary relational symbols (called \textit{marks}). Given a $\Sigma$-structure $\baa$, the transduction $T(\baa)$ is a set of all $\Gamma$-structures $\bbb$ such that $\bbb = I_T(\baa')$ where $\baa'$ is $\baa$ with arbitrary elements of $A$ marked by the unary marks.
If $\mcc$ is a class of $\Sigma$-structures, then we define $T(\mcc) = \bigcup_{\baa \in \mcc} T(\baa)$. A class $\mathcal{D}$ of $\Gamma$-structures is a \textit{transduction} of $\mcc$ if there exists a transduction $T$ such that $\mathcal{D} \subseteq T(\mcc)$.

Transductions are sometimes defined more generally, with allowed copying of the domain set. For simplicity, we define only the non-copying variant which is sufficient for our use case.

\section{Generalizing Proper $k$-Thin Graphs}\label{sec:def}

So-called $k$-thin graphs (as defined below) have been proposed and studied as a generalization of interval graphs by Mannino et al.~\cite{DBLP:journals/orl/ManninoORC07}.
Likewise, proper interval graphs have been naturally generalized into proper $k$-thin graphs~\cite{DBLP:journals/dam/BonomoE19}.

As forwarded in the introduction, we further generalize these classes into the classes of (proper) $k$-mixed-thin graphs. Note that since general $k$-mixed thin graphs have unbounded twin-width (as a generalization of interval graphs), we study only the proper case in this paper.

\begin{figure}[tb]
\vspace*{-1ex}%
        \centering\includegraphics[page=1,width=\textwidth]{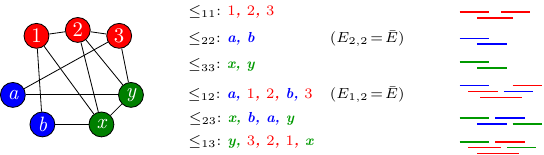}
\vspace*{-4ex}%
        \caption{An illustration of Definition~\ref{def:mixThin}.
        \textbf{Left}: a proper $3$-mixed-thin graph $G$, with the vertex set partitioned into $V_1 = \{1,2,3\}$, $V_2 = \{a,b\}$ and $V_3 = \{x,y\}$. 
        \textbf{Middle}: the six linear orders $\leq_{ij}$, and the sets $E_{i,j}$ defaulting to $E_{i,j} = E(G)$, except for $E_{1,2}$ and $E_{2,2}$.
        \textbf{Right}: a ``geometric'' proper interval representation of the orders $\leq_{ij}$ (notice -- separately for each pair~$i,j$), such that the edges between $V_i$ and $V_j$ belonging to $E_{i,j}$ are represented by intersections between intervals of colour $i$ and colour~$j$.}
        \label{figure:def-mixed-thin}
    \end{figure}
        
\begin{definition}[Mixed-thin and Proper mixed-thin]\label{def:mixThin}\normalfont 
    Let $G=(V, E)$ be a graph and $k>0$ an integer. Let $\bar E={V\choose2}\setminus E$ be the complement of its edge set.
    For two linear orders $\leq$ and $\leq'$ on the same set, we say that $\leq$ and $\leq'$ are {\em aligned} if they are the same or one is the inverse of the other.

    The graph $G$ is \textit{proper $k$-mixed-thin} if there exists a partition $\ca V=(V_1, \ldots, V_k)$ of~$V$,
    and for each $1 \le i \le j \le k$ a linear order $\le_{ij}$ on $V_i \cup V_j$ and a choice of $E_{i,j}\in\{E,\bar E\}$ (see Figure~\ref{figure:def-mixed-thin}), such that, again for every $1 \le i \le j \le k$,
    \begin{itemize}
\vspace*{-1ex}%
	\item[(a)] the restriction of $\le_{ij}$ to $V_i$ (resp.~to~$V_j$) is aligned with $\le_{ii}$ (resp.~$\le_{jj}$), and
	\item[(b)] for every triple $u,v,w$ such that ($\{u,v\}\subseteq V_i$ and $w\in V_j$) or ($\{u,v\}\subseteq V_j$ and $w\in V_i$),
	we have that if $u \lneq_{ij} v \lneq_{ij} w$ and $uw\in E_{i,j}$, then $vw\in E_{i,j}$.
	\item[(c)] for every triple $u,v,w$ such that ($\{v,w\}\subseteq V_i$ and $u\in V_j$) or ($\{v,w\}\subseteq V_j$ and $u\in V_i$),
	we have that if $u \lneq_{ij} v \lneq_{ij} w$ and $uw\in E_{i,j}$, then $uv\in E_{i,j}$.
    \end{itemize}
    General (not proper) $k$-mixed-thin graphs do not have to satisfy (c).    
    A (proper) $k$-mixed-thin graph $G$ is \textit{inversion-free} if, above, (a) is~replaced~with
    \begin{itemize}
\vspace*{-1ex}%
	\item[(a')] the restriction of $\le_{ij}$ to $V_i$ (resp.~to~$V_j$) is equal to $\le_{ii}$ (resp.~$\le_{jj}$).
    \end{itemize}
\end{definition}

We remark that the aforementioned {\em(proper) $k$-thin graphs} are those
(proper) $k$-mixed-thin graphs for which the orders $\le_{ij}$ (for $1 \le i \le j \le k$) in the definition can be chosen as the restrictions of the same linear order on~$V$,
and all~$E_{i,j}=E$ (`inversion-free' is insignificant in such case).

The class of $k$-mixed-thin graphs is thus a superclass of the class of $k$-thin graphs, and the same holds in the `proper' case.
On the other hand, the class of interval graphs is $1$-thin, but it is not proper $k$-mixed-thin for any finite $k$; the latter follows, e.g., easily from further Theorem~\ref{thm:propermix-to-tww}.

Bonomo and de Estrada~\cite[Theorem~2]{DBLP:journals/dam/BonomoE19} showed that given a (proper) $k$-thin graph $G$ and a suitable ordering $\leq$ of $V(G)$, a partition of $V(G)$ into $k$ parts compatible with $\leq$ can be found in polynomial time. 
On the other hand \cite[Theorem~5]{DBLP:journals/dam/BonomoE19}, given a partition $\ca V$ of $V(G)$ into $k$ parts, the problem of deciding whether there is an ordering of $V(G)$ compatible with $\ca V$ is \textsc{NP}-complete (again in the general and also in the proper sense).
These results do not answer whether the recognition of (proper) $k$-thin graphs is efficient or not, and neither can we at this stage say whether the recognition of (proper) $k$-mixed-thin graphs is efficient.

\subsection{Comparing (proper) $k$-mixed-thin to other classes}

We illustrate the use of our Definition~\ref{def:mixThin} by comparing it to ordinary thinness on some natural graph classes.
Recall that the (square) {\em$(r\times r)$-grid} is the Car\-tesian product of two paths of length $r$. Denote by $\overline{tK_2}$ the {\em complement of the matching} with $t$ edges.
We show several classes with unbounded thinness and bounded proper mixed-thinness.

\begin{proposition2rep}[Mannino et al.~\cite{DBLP:journals/orl/ManninoORC07}, Bonomo and de Estrada~\cite{DBLP:journals/dam/BonomoE19}]
	\label{prop:oldthin}
\\a) For every $t \ge 1$, the graph $\overline{tK_2}$ is $t$-thin but not $(t-1)$-thin.
\\b) The $(r \times r)$-grid has thinness linear in~$r$.
\\c) The thinness of the complete $m$-ary tree ($m>1$) is linear in its height.
\end{proposition2rep}
\noindent For an illustration, we briefly sketch proofs of these claims based on \cite{DBLP:journals/orl/ManninoORC07} and \cite{DBLP:journals/dam/BonomoE19}.
\begin{proof}
\textbf{a)} The graph $\overline{tK_2}$ contains exactly $t$ disjoint ``non-edges''.
It is evident that $\overline{tK_2}$ is $t$-thin since the partition can be formed by these non-edges.
On the other hand, if we had a partition with $t-1$ classes and order $\leq$ according to Definition~\ref{def:mixThin},
we could find a non-edge $xy$ such that neither of $x,y$ is the first one (in~$\leq$) in its class.
Up to symmetry, $x\lneq y$, and so there is a part $V_0\ni x,z$ where $z\lneq x$.
However, $zy$ is an edge but $xy$ is not, a contradiction to Definition~\ref{def:mixThin}(b).
\smallskip

\textbf{b)} Assume the grid is $s$-thin, and let $\leq$ be the order witnessing it.
Let $T$ be the set of the last $r^2/2$ vertices in $\leq$, and notice that $T$ has at least $\Omega(r)$ neighbours outside of $T$, denoted by $N(T)$.
Hence one of the $s$ parts, say $V_0$, in the partition of the grid satisfies $|V_0\cap N(T)|\geq\Omega(r/s)$.
Let $w\in V_0\cap N(T)$ be the least one in~$\leq$, and $t\in T$ be such that $wt$ is an edge of the grid.
Observe that every vertex $w'\in V_0\cap N(T)$ satisfies $w \le w'\leq t$, and so $w't$ is an edge by Definition~\ref{def:mixThin}(b).
However, the degree of $t$ is at most $4$, and hence $\Omega(r/s)\in\ca O(1)$ which implies~$s\geq\Omega(r)$.
\smallskip

\textbf{c)} is similar to b).
\qed
\end{proof}

\begin{proposition2rep}\label{prop:easymix}
a) For every $t \ge 1$, $\overline{tK_2}$ is inversion-free proper $1$-mixed-thin.
\\b) For all $m,n$ the $(m \times n)$-grid is inversion-free proper $3$-mixed-thin.
\\c) Every tree $T$ is inversion-free proper $3$-mixed-thin.
\end{proposition2rep}

\begin{proof}
\textbf{a)} The matching of $t$ edges, $tK_2$, is a proper interval graph, and so proper $1$-mixed-thin.
    Definition~\ref{def:mixThin} is then closed under the complement of the graph.

\smallskip
\textbf{b)}
    Let $V=\{(a,b) : 1 \le a \le m, 1 \le b \le n\}$ be the vertex set of the grid, and let $E=\{(a,b)(a + 1,b), (a,b)(a, b + 1) : 1 \le a < m, 1 \le b < n\}$ be its edge set.
    For $i=1,2,3$, let $V_i=\{(a,b) \in V : a \equiv i \mod{3}\}$ be our vertex partition (informally, into ``rows modulo~$3$'', see Figure~\ref{figure:grid-tree}, left).

    Now, for every $1 \le i \le j \le 3$, the set $V_i\cup V_{j}$ induces connected components where each one consists of only two (or one if $i=j$) rows.
    This fact allows us to naturally order, within $\leq_{ij}$, the components ``from left to right'' without the influence of the other components, and thus easily satisfy the conditions of Definition~\ref{def:mixThin}.
    Formally, for all $(a,b), (c,d) \in V_i \cup V_{j}$, let $(a,b) \le_{ij} (c,d)$ iff
\vspace*{-1ex} 
	$$a < c-1 \>\lor\> \big( |a - c| \le 1 \land b < d \big) \>\lor\> \big( a \in \{c-1,c\} \land b = d \big).$$

\textbf{c)} This case is similar to b) with BFS layering of $T$ in place of grid rows, see Figure~\ref{figure:grid-tree}, right.
    Let $T=(V,E)$, and $d(u,v)$ denote the distance between $u$ and $v$ in~$T$.
    We choose any root $r\in V(T)$ and define our partition $(V_1, V_2, V_3)$ of $V$ as the ``BFS layers from $r$ modulo~$3$'', formally as $V_i=\{v \in V : d(r, v) \equiv i \mod{3}\}$ for $i=1,2,3$.
    Observe that, for any $1 \le i < j \le 3$, the subgraph induced on $V_i\cup V_j$ consists of disjoint stars, and an order placing the stars one after another would work in Definition~\ref{def:mixThin}.

    We define $\leq_{ij}$ as follows. Let $\preceq$ be the pre-order of a breadth-first search starting in $r$.
    For any $1 \le i \le j \le 3$ and all $u, v \in V$, let $u \le_{ij} v$ iff $d(u,r) < d(v, r)-1$ or $(|d(u,r) - d(v, r)| \le 1 \>\land\> u \preceq v)$.
    It is easy to see that these orderings satisfy the requirements of Definition~\ref{def:mixThin}.
\qed
\end{proof}

\begin{figure}[tb]
\vspace*{-1ex}%
        \centering\includegraphics[page=1, scale=0.7]{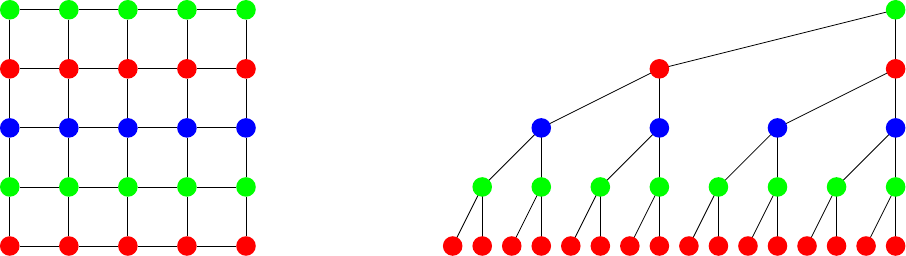}
\vspace*{-1ex}%
        \caption{\textbf{Left:} Partition of the rows of a gird into three parts in the proof of Proposition~\ref{prop:easymix}, case b). \textbf{Right:} Partition of the BFS layers of a tree into three parts in the proof of Proposition~\ref{prop:easymix}, case c).}
\vspace*{-2ex}%
        \label{figure:grid-tree}
    \end{figure}

Now we state Theorem~\ref{thm:multidimgrid}, which extends Proposition~\ref{prop:easymix}(b) to higher dimensional grids. 
The vertex sets of these grids can be viewed as the grid points in a multidimensional space, i.e., $V=\{1,2,\ldots,m\}^d$ for some $m$ and $d$.
Instead of $(a_1, \ldots, a_d) \in V$, we will write shortly $\vec a \in V$.

A {\em$d$-dimensional grid} is the Cartesian product of $d\geq1$ paths, i.e., there is an edge between $\vec a$ and $\vec b$ if $\sum_{i=1}^d |a_i-b_i| = 1$.
Similarly, the {\em$d$-dimensional full grid} is the strong product of $d\geq1$ paths, i.e., there is an edge between $\vec a$ and $\vec b$ if $\max_{1\leq i\leq d} |a_i-b_i| = 1$. Informally, the full grids are ``grids with all diagonals''.

We remark that Theorem~\ref{thm:multidimgrid} can actually be stated not only for these two types of grids but also for suitable other types of multidimensional grids
which are subgraphs of the full grid and contain the spanning ordinary grid (informally, for those which are ``something between'' ordinary and full grids),
and the proof would work the same way.

\begin{theorem2rep}\label{thm:multidimgrid}
    Let $d \ge 1$ be an arbitrary integer. Both $d$-dimensional grids and $d$-dimen\-sional full grids are inversion-free proper $3^{d-1}$-mixed-thin.
\end{theorem2rep}
\begin{proof}
The idea of the proof is the following: instead of partitioning the vertices by rows, as in Proposition~\ref{prop:easymix}(b), 
we partition them by lines parallel with the vector $(0,\ldots,0,1)$, and we count modulo~$3$ in the first $d-1$ coordinates (that is why we get $3^{d-1}$ parts). 
The proof becomes quite technical but the high-level argument stays the same; any two parts $V_i$ and $V_j$ induce multiple components which do not interleave in $\le_{ij}$. 
Again, each of these components is formed only by two lines, which allows us to order the vertices inside each of them.

 Since Definition~\ref{def:mixThin} is monotone under taking induced subgraphs, we may assume that the vertex set of the graph (the grid) is $V=\{1,2,\ldots,m\}^d$. Recall that instead of $(a_1, \ldots, a_d) \in V$, we will write simply $\vec a \in V$. 

    Let $\text{dist} : V^2 \to \mathbb{N}$ be any metric on $V$ such that for all $\vec a,\vec b\in V$, we have that
    $\max_{1\leq i\leq d} |a_i-b_i| \le \text{dist}(\vec a, \vec b)  \le \sum\limits_{i=1}^d |a_i-b_i|$
    and $\text{dist}(\vec a, \vec b) \ge \text{dist}\big(\vec a, (b_1, \ldots, b_{i-1},\\ a_i, b_{i+1}, \ldots, b_d)\big)$ for all $1 \le i \le d$.
    Let $E=\{ \vec a\vec b : \vec a, \vec b \in V, \text{dist}(\vec a, \vec b) = 1\}$ be the edge set of the grid graph.

\medskip
    Denote by $V_{i_1,\ldots,i_{d-1}} = \big\{ \vec a \in V : \bigwedge_{j=1}^{d-1} a_j \equiv (i_j-1) \!\mod{3} \big\}$.
    We will use $\big(V_i: i \in \{1,2,3\}^{d-1}\big)$ as our vertex partition.
    For all tuples $i, j \in \{1,2,3\}^{d-1}$ and all $\vec a,\vec b \in V_i \cup V_j$,
    we declare the predicate $\text{nearby}(\vec a, \vec b) =\left(\bigwedge_{k=1}^{d-1} |a_k - b_k| \le 1\right)$.
    Now, let $\vec a \le_{ij} \vec b$ if and~only~if
    \begin{align*}
	\Bigg( \neg &\text{nearby}(\vec a, \vec b) \land \left. \bigvee\limits_{k=1}^{d-1} \left( a_k + 1 < b_k \land \bigwedge\limits_{\ell=1}^{k-1} |a_\ell - b_\ell| \le 1 \right) \right)
    \\
    	&\lor\>\vec a =\vec b \>\lor\> \Big( \text{nearby}(\vec a, \vec b) \land a_d < b_d \Big)
    \\
    	&\lor \left( \text{nearby}(\vec a, \vec b) \land a_d = b_d \land \bigvee\limits_{k=1}^{d-1} \left( a_k < b_k \land \bigwedge\limits_{\ell=1}^{k-1} a_\ell = b_\ell \right) \right)
    .\end{align*}

    Let $i, j \in \{1,2,3\}^{d-1}$,
    $\vec a, \vec b \in V_i$ and $\vec c \in V_j$ be such that $\vec a\vec c \in E$,
    and either $\vec a \lneq_{ij} \vec b \lneq_{ij} \vec c$ or $\vec c \lneq_{ij} \vec b \lneq_{ij} \vec a$.
    
    \smallskip
    From $\max_{1\leq i\leq d} |a_i-c_i| \le \text{dist}(\vec a, \vec c) = 1$ we get that $\text{nearby}(\vec a, \vec c)$ is true,~and~so 
    both $\text{nearby}(\vec a, \vec b)$ and $\text{nearby}(\vec b, \vec c)$ are true. 
    Hence $(a_1, \ldots, a_{d-1}) = (b_1, \ldots, b_{d-1})$, and either $a_d < b_d \le c_d \le a_d + 1$ or $a_d - 1 \le c_d \le b_d < a_d$. 
    Then $\vec a\vec b, \vec b\vec c \in E$ follows from the properties of our metric $\text{dist}(\cdot)$.
\qed
\end{proof}

To further illustrate the strength of the new concept, we show that proper $k$-mixed-thin graphs generalize the following class~\cite{Jedelsky2021thesis},\todo{would be nice to relate to the BP}
which itself can be viewed as a natural generalization of proper interval graphs and $k$-fold proper interval graphs (a subclass of interval graphs whose representation can be decomposed into $k$ proper interval subrepresentations):

\begin{theorem2rep}\label{thm:pathsInGraphs}
    Let $G=(V, E)$ be a proper intersection graph of (vertex sets of) paths in some subdivision of a fixed connected graph $H$ with $m$ edges, and let $k$ be the sum of the number of paths and the number of cycles in $H$.
    Then $G$ is a proper $(m^2k)$-mixed-thin graph.
\end{theorem2rep}
\begin{proof}
\begin{figure}[t]
        \centering\includegraphics[page=1, width=\textwidth]{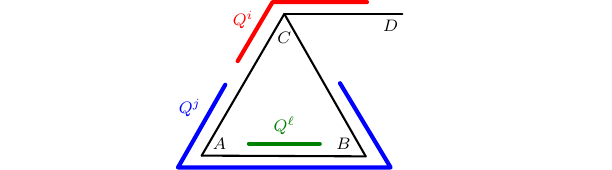}
        \caption{An illustration of the proof of Theorem~\ref{thm:pathsInGraphs}. The graph $H$ is in black. There are three parts of $\ca Q$, and their underlying graphs are drawn in red, green, and blue. $Q^i$ is a path with both edges being end-edges, $Q^j$ is a cycle with end-edges $AC$ and $BC$, and $Q^\ell$ is a single end-edge. Finding the ordering $\le_{j\ell}$ is case~\ref{thm5:all}, finding $\le_{i\ell}$ is case~\ref{thm5:no}, and finding $\le_{ij}$ is case~\ref{thm5:path} (the cases are analyzed in the text and in Figure~\ref{figure:flow}).}
        \label{figure:simple_thm5}
    \end{figure}
    Consider a (now fixed) subdivision $H'$ of $H$ such that each vertex $v\in V$ has its representative path $P_v\subseteq H'$.
    We may assume (by possibly taking a finer subdivision) that no end of $P_v$ (in~$H'$) is a vertex in $H$, i.e., that the ends of all $P_v$ ``lie inside'' the subdivided edges of~$H$.

    For each vertex $v \in V$, let $S_v\subseteq H$ be the unique minimal subgraph such that its corresponding subdivision $S_v'\subseteq H'$ contains~$P_v$.
    Observe there are at most two edges in $S_v$ whose subdivisions contain the ends of~$P_v$, and the remainder of $S_v$ is a path (or empty). We call these at most two edges the \textit{end-edges} (of $v$), and we denote the set containing them $E_v$.
    
    Denote by $Q_v := (S_v, E_v)$. We shall view $Q_v$ as a graph with one or two marked edges, and so by vertices (edges) of $Q_v$, we will mean vertices (edges) of $S_v$, and by end-edges of $Q_v$, we will mean elements of $E_v$.
    
    Consider the partition $\ca Q:=(V[Q_u]:{u \in V})$ of $V$, where $V[Q]=\{v: Q_v=Q\}$.
    For a part $V_i\in \ca Q$, if $V_i = V[Q]$, then we call $Q$ the \textit{underlying graph} of $V_i$, and we denote it $Q^i$.
    Notice that $|\ca Q| \le m^2k$ (since each underlying graph is either a path or a cycle). For an illustration, see Figure~\ref{figure:simple_thm5}.
\smallskip

    For each part $V_i \in \ca Q$, we choose $\le_{ii}$ to be the linear order of $V_i$ determined by in-order enumeration of the ends of representatives of $V_i$ on either of the end-edges of $Q^i$
    (this is unambiguous and sound since we have a proper representation, and since we allow inversions in Definition~\ref{def:mixThin}(a)\,).

    Now, for each pair $V_i$ and $V_j$ of parts in~$\ca Q$, we give a linear order and a choice of $E_{i,j}$ satisfying Definition~\ref{def:mixThin} (with respect to~$\ca Q$). There are six cases which need to be considered, depending on the relation between $Q^i$ and $Q^j$, see the diagram in Figure~\ref{figure:flow}.

\begin{figure}[t]
        \centering\includegraphics[page=1, width=0.95\textwidth]{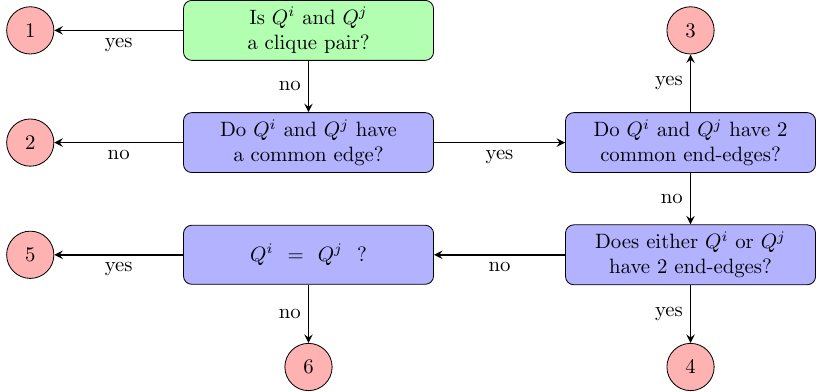}
\vspace*{-1ex}%
        \caption{A flowchart depicting the case analysis in the final part of the proof of Theorem~\ref{thm:pathsInGraphs}, starting in the green box. The numbers refer to cases analyzed in the text.}
\vspace*{-2ex}%
        \label{figure:flow}
    \end{figure}

    \begin{enumerate}
    \item\label{thm5:all} We say that the pair $Q^i, Q^j$ is a \textit{clique pair} if for every $u \in V_i, v \in V_j$, $uv \in E$.
    If $Q^i, Q^j$ is a clique pair, then we set $E_{i,j}=E$ and any ordering respecting the orders on $V_i$ and $V_j$ is fine. Note that in all the following cases, all edges shared by $Q^i$ and $Q^j$ are end-edges in both of them (otherwise all representative paths would intersect on this edge).
    \item\label{thm5:no} If $Q^i$ and $Q^j$ do not share any edge (and are not a clique pair), then there are no edges between $V_i$ and $V_j$ in $G$, and the same choice as in the case~\ref{thm5:all} is valid.
    \item\label{thm5:2} If $Q^i$ and $Q^j$ have two common end-edges (and are not a clique pair), then $Q^i$ and $Q^j$ are two paths, and their union is a cycle in $H$. This case is a little more complex and we deal with it later.
            Note that in all the following cases, $Q^i$ and $Q^j$ share exactly one end-edge.
     \item\label{thm5:path} Suppose that $Q^i$ has two end edges, one of which, $e$, is also an end-edge of $Q^j$, and the other one is not. Also suppose they are not a clique pair.
     Observe that $Q^j$ either has only one edge (namely $e$) or it has two end-edges (it cannot be a cycle with one end-edge because then $Q^i$ and $Q^j$ would be a clique pair).
      In both cases, $G[V_i \cup V_j]$ is a proper interval graph, we choose the ordering $\le_{ij}$ by enumerating the endpoints along $e$ (in one of the two possible directions), and we set $E_{i,j}=E$.
     \item\label{thm5:same} If $Q^i = Q^j$, then $i = j$, and we have already chosen the ordering $\le_{ii}$. We set $E_{i,i}=E$, and this is valid since $G[V_i]$ is a proper interval graph.
     \item\label{thm5:last} Suppose that both $Q^i$ and $Q^j$ have only one end-edge $e$, shared by both of them, but $Q^i \ne Q^j$. Also suppose they are not a clique pair. This implies that $e$ is the only edge in, say, $Q^i$, and that $Q^j$ is a cycle in $H$. We deal with this case in the final part of the proof, together with case~\ref{thm5:2}.
    \end{enumerate}
    
    Now we will finish the proof by dealing with cases~\ref{thm5:2} and~\ref{thm5:last}.
    Observe that case~\ref{thm5:2} can be reduced to case~\ref{thm5:last} by considering one of the paths, say, $Q^i$ to be a single edge.
    This way we obtain a subcase of case~\ref{thm5:last} in which $V_i$ is a clique.
    Thus it is enough to solve case~\ref{thm5:last}.
    
    Recall that $e$ is the common end-edge of $Q^i$ and $Q^j$, and that it is the only edge of $Q^i$.
    Let $e^+$ and $e^-$ be the endpoints of $e$, and let $e'$ be the subdivision of $e$ in $H'$.
    For $v \in V_j$, let $P_v^+$ (resp. $P_v^-$) be the maximal path in $P_v \cap e'$ containing $e^+$ (resp. $e^-$), see Figure~\ref{figure:final}.
    For $u,v \in V_j$, let $u \le_{jj}^+ v$ (resp $u \le_{jj}^- v$) iff $P_u^+ \subseteq P_v^+$ (resp. $P_u^- \subseteq P_v^-$).
    Observe that $\le_{jj}^+$ and $\le_{jj}^-$ are linear orders aligned with $\le_{jj}$ (one of them is $\le_{jj}$ and the other one is the inverse thereof).
    We may suppose that for $u,v \in V_i$, $u \le_{ii} v$ iff the left (resp. right) endpoint of $P_u$ is closer in $e'$ to $e^-$ than the left (resp. right) endpoint of $P_v$ (otherwise we would invert $\le_{ii}$).

    \begin{figure}[t]
\hspace*{-1.5ex}%
        \centering\includegraphics[page=1, width=1.03\textwidth]{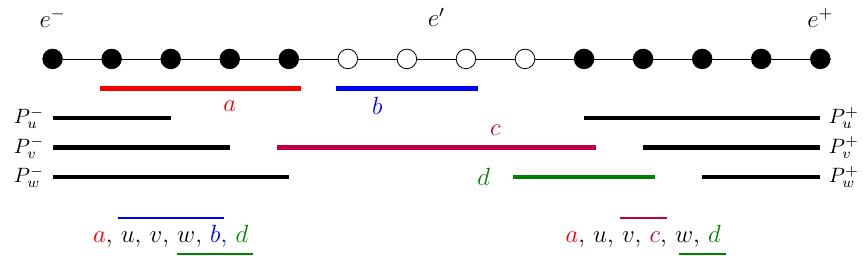}
        \caption{An illustration of the final part of the proof of Theorem~\ref{thm:pathsInGraphs} (cases~\ref{thm5:2} and~\ref{thm5:last}). \textbf{Top:} the subdivided edge $e'$. Black vertices belong to $P_x$ for some $x \in V_j$. \textbf{Middle:} Representatives of vertices from $V_i$ and $V_j$ (only parts intersecting $e'$). Either $b$ or $c$ is present, not both. \textbf{Bottom:} the ordering $\le_{ij}$, one with $b$ and one with $c$. The thin lines represent maximal bicliques in $E_{i,j}$ (since $E_{i,j}=\bar E$).}
        \label{figure:final}
    \end{figure}
    
    \smallskip
    Now we finally define $\le_{ij}$ and $E_{i,j}$. Restricted to $V_i$ (resp. $V_j$), $\le_{ij}$ is the same as $\le_{ii}$ (resp. $\le_{jj}^-$).
     For $u \in V_j$ and $v \in V_i$, $u \le_{ij} v$ if and only if $P_u^- \cap P_v = \emptyset$.
     In contrast with the other cases, we choose $E_{i,j} = \bar E$.
    
    Let us now check that our choice satisfies Definition~\ref{def:mixThin}. Let $u,v,w\in V_i\cup V_j$ such that $u \le_{ij} v \le_{ij} w$ and $uw \notin E$.
    If $u \in V_i$ and $w \in V_j$, then $P_u \cap P_w^- \ne \emptyset$ by definition of $\le_{ij}$, which would be a contradiction. Thus suppose $u \in V_j$ and $w \in V_i$.
    
    Suppose $v \in V_j$. Since $v \le_{ij} w$, we get $P_v^- \cap P_w = \emptyset$. Since $P_u^+ \cap P_w = \emptyset$ and $P_v^+ \subseteq P_u^+$ (since $v \le_{jj}^+ u$), we get $P_v^+ \cap P_w = \emptyset$. Thus $vw \notin E$.
    
    Lastly suppose $v \in V_i$. Since $u \le_{ij} v$, we get $P_u^- \cap P_v = \emptyset$.
    Since the right endpoint of $P_v$ is to the left of the right endpoint of $P_w$ and $P_u^+ \cap P_w = \emptyset$, we get $P_u^+ \cap P_v = \emptyset$.
    Thus $uv \notin E$, which concludes the proof. \qed

\end{proof}

\section{Proper $k$-Mixed-thin Graphs Have Bounded Twin-width}\label{sec:tww}

In the founding series of papers, Bonnet et al.~\cite{DBLP:conf/focs/Bonnet0TW20,DBLP:conf/soda/BonnetGKTW21,DBLP:conf/icalp/BonnetG0TW21,DBLP:conf/stoc/BonnetGMSTT22}
proved that many common graph classes (in addition to aforementioned posets of bounded width) are of bounded twin-width.
Their proof methods have usually been indirect (using other technical tools such as `mixed minors'), but for a few classes including proper interval graphs and multidimensional grids and full grids (cf.~Theorem~\ref{thm:multidimgrid}) they provided a direct construction of a contraction sequence.

We have shown \cite{DBLP:conf/iwpec/BalabanH21} that a direct and efficient construction of a contraction sequence is possible also for posets of width~$k$.
Stepping further in this direction, our proper $k$-mixed-thin graphs, which largely generalize proper interval graphs, still have bounded twin-width, as we are now going to show with a direct and efficient construction of a contraction sequence for them.


Before stating the result, we mention that $1$-thin graphs coincide with interval graphs which have unbounded twin-width by \cite{DBLP:conf/soda/BonnetGKTW21},
and hence the assumption of `proper' in the coming statement is necessary.

\begin{theorem}\label{thm:propermix-to-tww}
    Let $G$ be a proper $k$-mixed-thin graph. Then the twin-width of~$G$, i.e., the symmetric twin-width of $\mx{A}(G)$, is at most $9k$.
    The corresponding contraction sequence for $G$ can be computed in polynomial time from the vertex partition $(V_1,\ldots,V_k)$ and the orders $\leq_{ij}$ for $G$ from Definition~\ref{def:mixThin}.
\end{theorem}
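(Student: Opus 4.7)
The plan is to generalize the classical proper-interval contraction sequence part by part. For each part $V_i$, I would use the order $\le_{ii}$ and iteratively contract pairs of currently consecutive vertices of $V_i$, cycling through the parts $i=1,2,\ldots,k$ until every $V_i$ has shrunk to a single super-vertex, and then finally collapse the remaining $k$ super-vertices one by one. At every intermediate stage, each $V_i$ is an ordered sequence of super-vertices, each of which corresponds to a contiguous $\le_{ii}$-interval of the original $V_i$. The sequence is deterministic given the partition $(V_1,\ldots,V_k)$ and the orders $\le_{ij}$, and hence is produced in polynomial time, which takes care of the efficiency claim.

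To bound the red number, I fix an arbitrary current super-vertex $u\subseteq V_i$ and bound the number of its red neighbours contributed from each partner part $V_j$ separately; the desired $9k$ bound then follows by summing over $j=1,\ldots,k$. The key geometric input is that, by Definition~\ref{def:mixThin}(b) and (c) applied to the pair $(i,j)$, for every $v\in V_i$ the set $N_{ij}(v)=\{w\in V_j : vw\in E_{i,j}\}$ is a $\le_{ij}$-interval in $V_j$, and (using also (a)) these intervals \emph{slide monotonically} as $v$ ranges in $\le_{ii}|_{V_i}$. Thus the super-vertices of $V_i$, listed in $\le_{ii}$, have neighbourhoods in $V_j$ that form a monotone chain of intervals, and two $\le_{ii}$-consecutive super-vertices of $V_i$ disagree on $V_j$ only in a small ``frontier'' of super-vertices near the two interval endpoints. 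These frontier super-vertices are precisely the red neighbours of $u$ toward~$V_j$. The complement case $E_{i,j}=\bar E$ is handled identically by swapping adjacencies with non-adjacencies, and an inverted alignment of $\le_{ii}$ inside $\le_{ij}$ is absorbed into the observation that consecutiveness in $\le_{ii}|_{V_i}$ is the same as consecutiveness in $\le_{ij}|_{V_i}$ up to reversal; a short case analysis then yields a uniform per-pair constant, whose sum is $9k$.

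The main obstacle will be the simultaneous bookkeeping of red entries when both $V_i$ and $V_j$ are in the middle of being contracted: a frontier super-vertex of $V_j$ can itself be the result of many earlier contractions, and one has to check that the sliding-interval property survives column contractions as well as row contractions, and that further merging inside $V_j$ does not revive red entries at super-vertices already accounted for. The careful part of the case analysis separates off-diagonal pairs $i\ne j$ from the diagonal $i=j$ (where the proper-interval structure \emph{within} $V_i$ contributes additional frontier cells that must be counted independently from the cross-part frontier), and the worst case lives at the boundary moments just before a frontier super-vertex is itself about to be contracted away. This is what fixes the explicit constant~$9k$, and also the reason why a naive per-pair analysis ignoring interactions does not directly give a tight bound.
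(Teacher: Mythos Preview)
Your sliding-interval picture is exactly the paper's diagonal-trisection lemma (Lemma~\ref{lem:matrixboundaries}), so the geometric intuition is right.  The gap is in the contraction schedule: neither cycling round-robin through the parts nor contracting an \emph{arbitrary} consecutive pair inside a part yields a bound depending only on~$k$.

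Round-robin breaks on unbalanced parts.  Take $|V_i|=2$ and $|V_j|=n$.  Nothing in Definition~\ref{def:mixThin} prevents the two vertices of $V_i$ from having $E_{i,j}$-neighbourhoods $[1,\lfloor n/2\rfloor]$ and $[\lfloor n/2\rfloor,n]$ in $V_j$ (the intervals slide monotonically, but the slide can be arbitrarily long).  Your first contraction in $V_i$ then produces $\Theta(n)$ red entries toward $V_j$, which is still essentially uncontracted.  Even with all parts of equal size, the same phenomenon kills the ``arbitrary consecutive pair'' choice: two $\le_{ii}$-adjacent vertices of $V_i$ may sit far apart in $\le_{ij}$, so their frontier in $V_j$ is large.

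The paper fixes both issues simultaneously.  It always contracts inside the currently \emph{largest} part (so every other block of columns has at most that many super-vertices), and inside that part it does \emph{not} take an arbitrary pair: it runs a pigeonhole over the $s-1$ consecutive pairs against the total horizontal length ($\le 2ks$) of the $2k$ diagonal boundaries, obtaining a pair whose merged horizontal value is at most $\lfloor 4ks/(s-1)\rfloor\le 5k-1$ (using $s\ge 9$), hence at most $7k$ red entries after the merge.  The remaining $+2k$ comes from the at most $2k$ later column contractions that can cross a boundary in that row.  Both selection steps are essential; the ``careful case analysis'' you defer to is precisely where the constant is won, and it cannot be recovered from your schedule as stated.
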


Referring to Definition~\ref{def:mixThin}, the proper $k$-mixed-thin graph $G$ is associated with a vertex partition $(V_1,\ldots,V_k)$ and linear orders $\leq_{ij}$.
In the course of proving Theorem~\ref{thm:propermix-to-tww}, an adjacency matrix $\mx{A}(G)$ of $G$ is always obtained by ordering the $k$ parts arbitrarily, and then inside each part using the order $\le_{ii}$. 
Furthermore, we denote $\mx{A}_{i,j}(G)$ the submatrix with rows from $V_i$ and columns from $V_j$.

We would like to talk about parts (``areas'') of a~$(p \times q)$ matrix $\mx M$. To do so, we embed such a matrix into the plane as a $((p+1) \times (q+1))$-grid, where entries of the matrix are represented by labels of the bounded square faces of the grid. 
We call a \emph{boundary} any path in the grid, which is also a separator of the grid.
In this view, we say that a matrix entry $a$ is {\em next to} a boundary if at least one of the vertices of the face of $a$ lies on the boundary.

Note that the grid has four corner vertices of degree~$2$, and a \emph{diagonal boundary} is a shortest (i.e., geodesic) path going either between the top-left and the bottom-right corners, or between the top-right and the bottom-left corners.
We say that two diagonal boundaries $b_1$ and $b_2$ are \textit{crossing} if $b_1$ contains two grid vertices $v$ and $v'$ not contained in $b_2$, such that $v$ and $v'$ belong to different parts of the matrix separated by $b_2$.
We call a matrix $\mx M$ {\em diagonally trisected} if $\mx M$ contains two non-crossing diagonal boundaries with the same ends which separate the matrix into three {\em parts}.
The part bounded by both diagonal boundaries is called the {\em middle part}.
See Figure~\ref{figure:diagonalBoundary}.

\begin{figure}[tb]
    \centering\includegraphics[page=1]{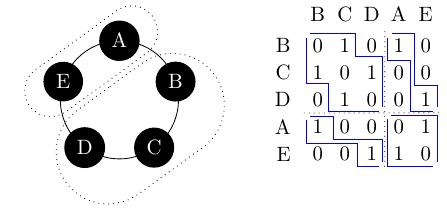}
    \vspace*{-1ex}%
    \caption{On the left, there is a partition of $C_5$ into parts $(\{B,C,D\},\{A,E\})$, which with the ordering e.g.\ $A \le B \le C \le D \le E$ certifies that $C_5$ is proper 2-thin, therefore proper 2-mixed-thin as well. 
	On the right, there is an adjacency matrix of $C_5$, together with eight blue diagonal boundaries obtained by the process described in Lemma \ref{lem:matrixboundaries}.}
    \label{figure:diagonalBoundary}
\end{figure}

Now we prove Lemma~\ref{lem:matrixboundaries}, which states that each submatrix $\mx{A}_{i,j}(G)$ is diagonally trisected in a special way. These diagonal trisections will later be crucial for finding a good symmetric contraction sequence of $\mx{A}(G)$.

\begin{lemma2rep}\label{lem:matrixboundaries}
    Let $G$ be a proper $k$-mixed-thin graph. For all $1 \le i, j \le k$, the submatrix $\mx{A}_{i,j}(G)$ is diagonally trisected,
    such that each part has either all entries~$0$ or all entries~$1$, with the exception of entries on the main diagonal~of~$\mx{A}(G)$. Furthermore, the diagonal boundaries of the submatrix $\mx{A}_{i,i}(G)$ are symmetric (w. r. to the main diagonal).
\end{lemma2rep}
\begin{proof}
Let $1 \le i, j \le k$. We may assume that $i \le j$ since the matrix $\mx{A}(G)$ is symmetric.
Recall that by Definition~\ref{def:mixThin}, there is a choice of an ordering $\le_{ij}$ and $E_{i,j} \in \{E(G), \overline{E(G)}\}$. 
    Observe that the stated property of a submatrix $\mx{A}_{i,j}(G)$ is preserved by possibly reversing the ordering of rows or columns of the submatrix, hence we can assume that the restriction of $\le_{ij}$ to $V_i$ (resp. $V_j$) is equal to $\le_{ii}$ (resp. $\le_{jj}$).
    With this assumption, we will prove that the two diagonally trisecting boundaries go from the top-left corner to the bottom-right corner of $\mx{A}_{i,j}(G)$.
    Furthermore observe that the stated property is preserved by complementing the submatrix (that is, swapping entries 1 and 0 except for the main diagonal of $\mx{A}(G)$), hence we can assume that $E_{i,j}=E(G)$.

    For all $u \in V_i$ and all $v \in V_j$, denote by $a_{uv}$ the entry of $\mx{A}(G)$ in the intersection of the row corresponding to $u$ and the column corresponding to $v$.
    Furthermore, we say that the entry $a_{uv}$ {\em belongs to the rows} whenever $u \le_{ij} v$, and that it {\em belongs to the columns} otherwise.
    
    Observe that there is a diagonal boundary between entries belonging to the rows, and entries belonging to the columns. We find one of our desired diagonal boundaries inside the area belonging to the rows, and the other inside the area belonging to the columns.

    Let $u \in V_i, v \lneq_{jj} w \in V_j \setminus\{ u \}$ be such that both $a_{u,v}$ and $a_{u,w}$ belong to rows. Then it follows from Definition \ref{def:mixThin} that if $a_{u,w}=1$, then $a_{u,v}=1$. 
Informally, this means that a matrix entry $1$ belonging to the rows is ``propagated to the left''. 
    
    Similarly, let $w \in V_j, u \lneq v \in V_i \setminus\{ w \}$ be such that both $a_{u,w}$ and $a_{v,w}$ belong to rows. Then it follows from Definition \ref{def:mixThin} that if $a_{u,w}=1$ then $a_{v,w}=1$. 
Informally, this means that a matrix entry $1$ belonging to the rows is ``propagated down''.

    Therefore there is a diagonal boundary splitting the entries belonging to the rows between those which contain ones, and those which contain zeros, with the possible exception of entries on the main diagonal of $\mx{A}(G)$.
The diagonal boundary splitting the entries belonging to the columns can be obtained analogously.

Finally, the symmetry of diagonal boundaries in the case of $i=j$ follows from the symmetry of the matrix.
\qed
\end{proof}

\begin{proof}[of Theorem \ref{thm:propermix-to-tww}]
    For each $1 \le i \neq j \le k$, by Lemma~\ref{lem:matrixboundaries}, the submatrix $\mx{A}_{i,j}(G)$ of $\mx A(G)$ is diagonally trisected such that each part has all entries equal (i.e., all $0$ or all $1$). 
    The case of $i=j$ is similar, except that the entries on the main diagonal might differ from the remaining entries in the same area.
    Furthermore, since the matrix $\mx{A}$ is symmetric, we can assume that the diagonal boundaries are symmetric as well.

    We generalize this setup to matrices with {\em red} entries $r$; these come from contractions of non-equal entries in $\mx A(G)$, cf.~Subsection~\ref{sub:twwdef}. 
    Considering a~matrix $\mx M=(m_{uv})_{u,v}$ obtained by symmetric contractions from $\mx A(G)$, we assume that
    \begin{itemize}\vspace*{-1ex}
	\item $\mx M$ is {\em consistent} with the partition $(V_1,\ldots,V_k)$, meaning that only rows and columns from the same part have been contracted in~$\mx A(G)$,
	\item $\mx M$ is {\em red-aligned}, meaning that each submatrix $\mx M_{i,j}$ obtained from $\mx{A}_{i,j}(G)$ by row contractions in $V_i$ and column contractions in $V_j$,
	is diagonally trisected such that (again with the possible {\em exception} of entries on the main diagonal of $\mx M$):
	each of the three parts has all entries either from $\{0,r\}$ or from $\{1,r\}$, and moreover, the entries $r$ are only in the middle part and next to one of the diagonal boundaries, and
    \item the diagonal boundaries of $\mx M$ are also symmetric, that is, there is a boundary between $m_{uv}$ and $m_{uw}$ iff there is a boundary between $m_{vu}$ and $m_{wu}$.
    \end{itemize}

    We are going to show that there is a symmetric matrix-contraction sequence starting from $\mx M^0:=\mx{A}(G)$ down to an $(8k \times 8k)$ matrix $\mx M^t$,
    such that all square matrices $\mx M^m$, $0\leq m\leq t$, in this sequence are consistent with $(V_1, \ldots, V_k)$, red-aligned, and have red number at most $9k$. Furthermore, the matrices in our sequence are symmetric, and so are the diagonal boundaries.
    Hence we only need to observe the red values of the rows.
    Then, once we get to $\mx M^t$, we may finish the contraction sequence arbitrarily while not exceeding the red value~of~$8k$.

    Assume we have got to a matrix $\mx M^m$, $m\geq0$, of the claimed properties in our sequence, and $\mx M^m$ has more than $8k$ rows. 
    The induction step to the next matrix $\mx M^{m+1}$ consists of two parts:
    \begin{itemize}\vspace*{-1ex}
	\item[(i)] We find a pair of consecutive rows from (some) one part of $(V_1, \ldots, V_k)$, such that their contraction does not yield more than $7k$ red entries.
	\item[(ii)] After we do this row contraction followed by the symmetric column contraction to $\mx M^{m+1}$ (which may add one red entry up to each other row of $\mx M^{m+1}$), we show that the red value of any other row does not exceed~$7k+2k=9k$.
    \end{itemize}

    Part (i) importantly uses the property of $\mx M^m$ being red-aligned, and is given separately in the next claim:
    \begin{claim2rep}\label{clm:onecontract}
	If a matrix $\mx M^m$ satisfies the above claimed properties and is of size more than $8k$,
	then there exists a pair of consecutive rows from one part in $\mx M^m$, such that their contraction gives a row with at most $7k$ red entries
	(a technical detail; this number includes the entry coming from the main diagonal of $\mx M^m$).
	After this contraction in $\mx M^m$, the newly created matrix $\mx M^{m+1}$ will be again red-aligned.
    \end{claim2rep}
    \begin{proof}[Subproof]
    Each boundary of a diagonally trisected submatrix $\mx M^m_{i,j}$, where $1 \le i, j \le k$, can be split into vertical and horizontal segments, and the length of a segment is the number of its grid edges.
    For each row $q$ of $\mx M^m$ and each boundary $b$, we define the \emph{horizontal value of $q$ restricted to $b$} as the length of the horizontal segment of $b$ between the rows $q$ and~$q+1$, or $0$ if there is no such horizontal segment.
    The \emph{horizontal value $h(q)$ of a row $q$} is then the sum of the horizontal values of $q$ restricted to each boundary $b$, over all $2k$ diagonal boundaries $b$ crossing $q$. See Figure~\ref{figure:big-matrix} for an illustration.
    
    \begin{figure}[t]
        \centering\includegraphics[page=1, width=1.02\textwidth]{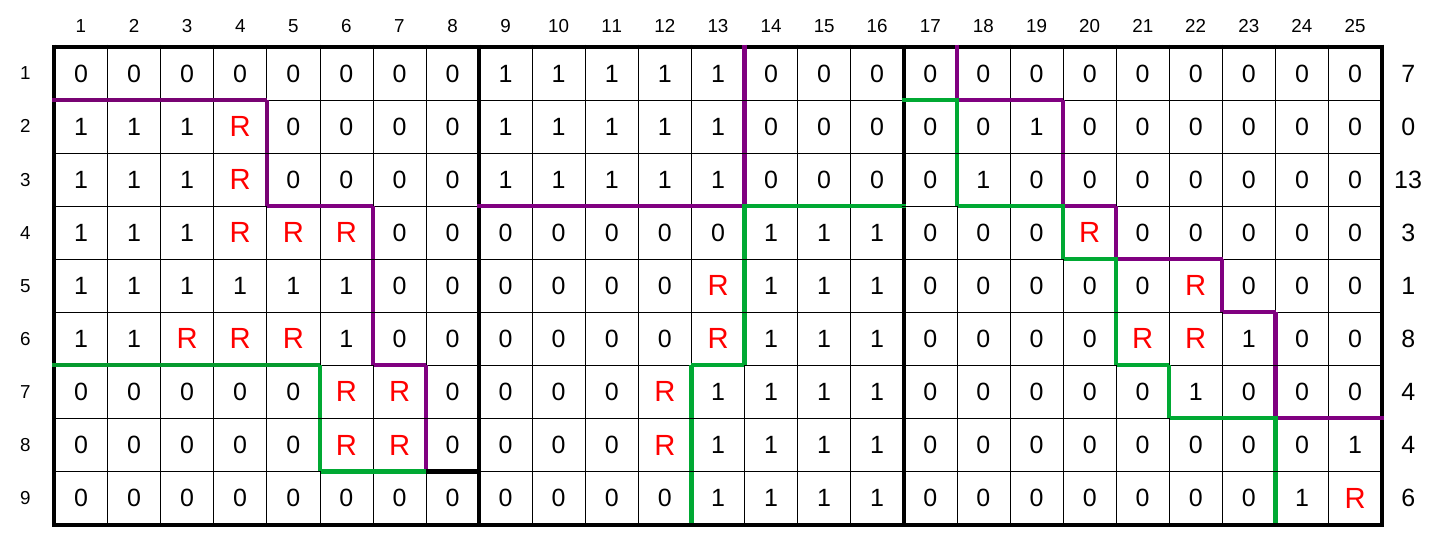}
        \caption{An illustration of Claim~\ref{clm:onecontract} for $k=3$ and $s=9$, depicting $\mx M^m_3$. The submatrices $\mx M^m_{3,1}$, $\mx M^m_{3,2}$, $\mx M^m_{3,3}$ are separated by thick lines.
The numbers left of the matrix and above it are the labels of the rows and columns. The diagonal boundaries are drawn in green and purple, except for the borders of the submatrices and the segment between rows 8 and 9 in column 8 (which is shared by both colors). The boundaries go from the top-left corner to the bottom-right corner in $\mx M^m_{3,1}$ and $\mx M^m_{3,3}$, and from the bottom-left corner to the top-right corner in $\mx M^m_{3,2}$ (this is possible by inverting the orders $\le_{ij}$ cf. Definition~\ref{def:mixThin}). 
Observe that $\mx M^m_{3,3}$ is symmetric.    
        \\ On the right of each row of the matrix, one can read the horizontal values of the rows. 
	Observe that for row 8, the shared segment is counted twice (which is a slight over-counting). In the proof, we state that there is a pair of consecutive rows with sum of horizontal values at most $\frac{4ks}{s-1} = \frac{27}{2}$. Here, all pairs except for rows 3 and 4 satisfy this condition.}
\vspace*{-1ex}%
        \label{figure:big-matrix}
    \end{figure}
    
    Observe that, since each red entry in $\mx M^m_{i,j}$ is in the middle part of the diagonal trisection of $\mx M^m_{i,j}$ (because $\mx M^m$ is red-aligned), the number of red entries in a row $q$ which are next to a particular horizontal segment of a boundary $b$ exceeds the length of this segment by at most one. This means that the total number of red entries in any row $q$ of $\mx M^m$ is at most $h(q) + 2k + 1$ (adding also one possible exceptional entry on the main diagonal).
    We can hence focus on the horizontal value only.

    Since the size of  $\mx M^m$ is more than~$8k$, the largest part of $(V_1, \ldots, V_k)$ restricted (by previous contractions) to $\mx M^m$ has size $s\geq9$.
    So, let us fix $i\in\{1,\ldots,k\}$ such that the submatrix $\mx M^m_{i,1}$ has $s$ rows, and notice that for each $j\in\{1,\ldots,k\}$ the submatrix $\mx M^m_{i,j}$ has $s$ rows and at most $s$ columns.
    Let us denote $\mx M^m_{i}$ the submatrix of $\mx M^m$ obtained by union of $\mx M^m_{i,j}$ over all $j\in\{1,\ldots,k\}$.
    The sum of all horizontal values restricted to one boundary of any $\mx M^m_{i,j}$ is, by the definition, at most 
     the number of its columns (so, at most~$s$).
    Consequently, the sum of horizontal values of all rows of $\mx M^m_i$ is at most~$2ks$.

    Furthermore, if we contract a row $q$ with the next row $q+1$, then the horizontal value of the resulting row will be at most~$h(q)+h(q+1)$.%
\footnote{This value~$h(q)+h(q+1)$ plays the role of the red potential in Section~\ref{sec:red-potential}.}
    Therefore, by the pigeon-hole principle, among the $s-1$ pairs of consecutive rows in $\mx M^m_{i}$ there is a pair whose contraction yields the horizontal value at most $\frac{2\cdot2ks}{s-1}$.
    Since $s\geq9$, we have $h(q)\leq\lfloor\frac{4ks}{s-1}\rfloor\leq\lfloor\frac{4k\cdot9}{9-1}\rfloor\leq5k-1$ for the now contracted row~$q$.
    The~number of red entries after the contraction hence is at most $h(q)+2k+1\leq7k$.

    Finally, as for the red-alignedness property after the contraction of rows $q$ and $q+1$, we observe that in every column $c$ such that the matrix entries at $(q,c)$ and $(q+1,c)$ are both {\em not} in the middle part (of the respective trisected submatrix $\mx M^m_{i,j}$),
    these entries have equal value which is not red, and this stays so after the contraction.
    The same can be said when the entries at $(q,c)$ and $(q+1,c)$ have equal non-red value in the middle part of $\mx M^m_{i,j}$.
    Otherwise, at least one of the entries at $(q,c)$ and $(q+1,c)$ is next to a diagonal boundary in $\mx M^m_{i,j}$ (or on the main diagonal of $\mx M^m$), and so we do not care that it may become red.
    After the contraction, the horizontal boundary segment is simply shifted right above or right below the contracted entry at $(q,c)$, so that this entry stays in the middle part.
    \subqed
    \end{proof}

    In part (ii) of the induction step, we fix any row $i\in\{1,\ldots,k\}$ of $\mx M^{m+1}$.
    Row $i$ initially (in $\mx M^0$) has no red entry, and it possibly got up to $7k$ red entries in the previous last contraction involving it.
    After that, row $i$ has possibly gained additional red entries only through column contractions,
    and such a contraction leading to a new red entry in row~$i$ (except on the main diagonal which has been accounted for in Claim~\ref{clm:onecontract})
    may happen only if the two non-red contracted entries lied on two sides of the same diagonal boundary.
    Since we have $2k$ such boundaries throughout our sequence, we get that the number of red entries in $\mx M^{m+1}$ is indeed at most~$7k+2k=9k$.

\smallskip
\ifx\proof\inlineproof
    We have finished the induction step, and so the whole proof of Theorem~\ref{thm:propermix-to-tww} by the above outline.
\else
    We have finished the induction step, and so the whole proof by the above outline.
\fi
    Note that all steps are efficient, including Claim~\ref{clm:onecontract} since at every step there is at most a linear number of contractions which we are choosing from.
\qed
\end{proof}

The following result is a corollary of our Theorem~\ref{thm:propermix-to-tww} and Theorem~21 from~\cite{DBLP:conf/focs/Bonnet0TW20}.

\begin{corollary}[based on \cite{DBLP:conf/focs/Bonnet0TW20}]\label{cor:propermix-FOmc}
    Assume a proper $k$-mixed-thin graph $G$, given alongside with the vertex partition and the orders from Definition~\ref{def:mixThin}.
    Then FO model checking on $G$ is solvable in FPT time with respect to~$k$.
\qed
\end{corollary}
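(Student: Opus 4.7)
The plan is to derive the corollary by a direct combination of Theorem~\ref{thm:propermix-to-tww} with the foundational FO model-checking result of Bonnet et al.~\cite{DBLP:conf/focs/Bonnet0TW20}. Recall that the latter states the following: there is an algorithm which, given a graph $G$ together with a contraction sequence of width at most $d$, decides any fixed FO sentence $\varphi$ in time $f(d,\varphi)\cdot|V(G)|^{O(1)}$ for some computable function~$f$. The crucial point is that this algorithm requires the contraction sequence as part of its input, rather than only the graph; producing one of bounded width from scratch is in general the hard part of applying twin-width algorithmically, since the exact twin-width is NP-hard to determine and no general efficient approximation is known.

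First I would observe that the input to the corollary is precisely what Theorem~\ref{thm:propermix-to-tww} requires: the graph $G$ together with the vertex partition $(V_1,\ldots,V_k)$ and the orders $\leq_{ij}$ from Definition~\ref{def:mixThin}. Applying Theorem~\ref{thm:propermix-to-tww} therefore yields, in polynomial time, a symmetric contraction sequence for $G$ whose width is at most $9k$. Feeding this contraction sequence (together with $G$ and the FO sentence $\varphi$ to be tested) into the algorithm of \cite{DBLP:conf/focs/Bonnet0TW20} then decides $G\models\varphi$ in time $f(9k,\varphi)\cdot|V(G)|^{O(1)}$, which is FPT in the combined parameter~$k+|\varphi|$, and in particular FPT in $k$ for every fixed~$\varphi$.

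There is no real obstacle here, only a sanity check: one must verify that the contraction sequence produced in Theorem~\ref{thm:propermix-to-tww} meets the format expected by the algorithm of \cite{DBLP:conf/focs/Bonnet0TW20}, i.e.\ a sequence of symmetric contractions on the adjacency matrix with bounded red number throughout. This is exactly what Theorem~\ref{thm:propermix-to-tww} produces, so the composition is immediate and the corollary follows. Consequently the proof can be stated in one or two sentences, simply citing the two ingredients.
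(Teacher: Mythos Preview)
Your proposal is correct and matches the paper's own treatment: the paper gives no proof at all beyond the \qed, since the corollary is an immediate consequence of composing Theorem~\ref{thm:propermix-to-tww} (which produces a width-$9k$ contraction sequence in polynomial time from the given partition and orders) with the FO model-checking algorithm of~\cite{DBLP:conf/focs/Bonnet0TW20}. Your write-up spells out exactly this composition, including the point that the contraction sequence must be supplied as input to the algorithm of~\cite{DBLP:conf/focs/Bonnet0TW20}, which is precisely why the hypothesis of the corollary includes the partition and orders.
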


It may be possible that the constant 9 in the statement of Theorem~\ref{thm:propermix-to-tww} could be slightly improved, by counting the red entries more carefully (which would probably make the proof more complicated).

However, the bound cannot be improved below linear dependence, as we now show in Proposition~\ref{prop:propermix-to-tww-lower}. The construction of the graph $G$ in the proof is based simply on the construction of a poset of high twin-width from \cite[Proposition~2.4]{DBLP:conf/iwpec/BalabanH21},
which is first modified by ``doubling'' its chains, and then made into a simple undirected~graph.

\begin{proposition2rep}\label{prop:propermix-to-tww-lower}
    For every integer $k\geq1$, there exists an inversion-free proper $(2k+1)$-mixed-thin graph $G$ such that the twin-width of $G$ is at least $k$.
\end{proposition2rep}

\begin{proof}
\begin{figure}[th]
$$
\begin{tikzpicture}[scale=0.75]
\tikzstyle{every node}=[draw, shape=circle, minimum size=3pt,inner sep=0pt, fill=black]
\node[label=left:$c_1^0$] at (0,0) (a0) {}; \node at (0,1) (a1) {}; \node at (0,2) (a2) {};
\node at (0,3) (a3) {}; \node at (0,4) (a4) {}; \node at (0,5) (a5) {}; 
\node at (0,6) (a6) {}; \node at (0,7) (a7) {}; \node[label=left:$c_1^8$] at (0,8) (a8) {}; 
\node[label=left:$c_2^0$] at (3,0) (b0) {}; \node at (3,1) (b1) {}; \node at (3,2) (b2) {};
\node at (3,3) (b3) {}; \node at (3,4) (b4) {}; \node at (3,5) (b5) {}; 
\node at (3,6) (b6) {}; \node at (3,7) (b7) {}; \node[label=left:$c_2^8$] at (3,8) (b8) {}; 
\node[label=left:$c_3^0$] at (6,0) (c0) {}; \node at (6,1) (c1) {}; \node at (6,2) (c2) {};
\node at (6,3) (c3) {}; \node at (6,4) (c4) {}; \node at (6,5) (c5) {}; 
\node at (6,6) (c6) {}; \node at (6,7) (c7) {}; \node[label=right:$c_3^8$] at (6,8) (c8) {}; 
\node[label=right:$c_4^0$] at (9,0) (d0) {}; \node at (9,1) (d1) {}; \node at (9,2) (d2) {};
\node at (9,3) (d3) {}; \node at (9,4) (d4) {}; \node at (9,5) (d5) {}; 
\node at (9,6) (d6) {}; \node at (9,7) (d7) {}; \node[label=right:$c_4^8$] at (9,8) (d8) {};
\draw[very thick] (a0) -- (a8) (b0) -- (b8) (c0) -- (c8) (d0) -- (d8) ; 
\draw[thick] (a0) -- (b3) (b0) -- (c3) (c0) -- (d3) ; 
\draw[thick] (a3) -- (b6) (b3) -- (c6) (c3) -- (d6) ; 
\draw[very thin] (a1) -- (b6) ;
\draw[very thin,dashed] (a0) -- (b4) (a0) -- (b5) (a0) -- (b6) (a0) -- (b7) (a0) -- (b8) ;
\draw[very thin,dashed] (a1) -- (b7) (a1) -- (b8) ;
\draw[very thin,dashed] (a2) -- (b6) (a2) -- (b7) (a2) -- (b8) ;
\draw[very thin,dashed] (a3) -- (b7) (a3) -- (b8) ;
\draw[very thin] (b1) -- (c6) ;
\draw[dotted] (b0) -- (c4) (b0) -- (c5) (b0) -- (c6) (b0) -- (c7) (b0) -- (c8) ;
\draw[dotted] (b1) -- (c7) (b1) -- (c8) ;
\draw[dotted] (b2) -- (c6) (b2) -- (c7) (b2) -- (c8) ;
\draw[dotted] (b3) -- (c7) (b3) -- (c8) ;
\draw[very thin] (c1) -- (d6) ;
\draw[dotted] (c0) -- (d4) (c0) -- (d5) (c0) -- (d6) (c0) -- (d7) (c0) -- (d8) ;
\draw[dotted] (c1) -- (d7) (c1) -- (d8) ;
\draw[dotted] (c2) -- (d6) (c2) -- (d7) (c2) -- (d8) ;
\draw[dotted] (c3) -- (d7) (c3) -- (d8) ;
\color{blue}
\draw[thick] (a1) -- (c4) (b1) -- (d4) ; 
\draw[thick] (a4) -- (c7) (b4) -- (d7) ;
\draw (a2) -- (c7) ;
\draw[dashed] (a0) to[bend right=3] (c4) (a0) -- (c5) (a0) to[bend right=3] (c6) (a0) -- (c7) (a0) to[bend right=3] (c8) ;
\draw[dashed] (a1) to[bend right=3] (c5) (a1) -- (c6) (a1) to[bend right=3] (c7) (a1) -- (c8) ;
\draw[dashed] (a2) to[bend right=3] (c8) (a3) to[bend right=5] (c7) (a3) -- (c8) (a4) to[bend right=5] (c8) ;
\draw (b2) -- (d7) ;
\draw[dotted] (b0) to[bend right=3] (d4) (b0) -- (d5) (b0) to[bend right=3] (d6) (b0) -- (d7) (b0) to[bend right=3] (d8) ;
\draw[dotted] (b1) to[bend right=3] (d5) (b1) -- (d6) (b1) to[bend right=3] (d7) (b1) -- (d8) ;
\draw[dotted] (b2) to[bend right=3] (d8) (b3) to[bend right=5] (d7) (b3) -- (d8) (b4) to[bend right=5] (d8) ;
\color{green!60!black}
\draw[thick] (a2) to[bend right=5] (d5) ; 
\draw[thick] (a5) to[bend right=5] (d8) ; 
\draw[dashed] (a0) -- (d5) (a0) to[bend right=3] (d6) (a0) -- (d7) (a0) -- (d8) ;
\draw[dashed] (a1) -- (d5) (a1) to[bend right=3] (d7) (a1) -- (d6) (a1) -- (d8) ;
\draw[dashed] (a2) -- (d6) (a2) to[bend right=3] (d8) (a2) -- (d7) (a3) -- (d8) (a4) -- (d8) ;
\end{tikzpicture}
\qquad\raise20ex\hbox{\huge\dots~}
$$
$$
\begin{tikzpicture}[scale=0.38]
\scriptsize
\tikzstyle{every node}=[draw, shape=circle, minimum size=3pt,inner sep=0pt, fill=black]
\node[label=left:$c_1^0$] at (0,0) (a0) {}; \node at (0,1) (a1) {}; \node at (0,2) (a2) {};
\node at (0,3) (a3) {}; \node at (0,4) (a4) {}; \node at (0,5) (a5) {}; 
\node at (0,6) (a6) {}; \node at (0,7) (a7) {}; \node[label=left:$c_1^8$] at (0,8) (a8) {}; 
\node[label=left:$c_2^0$] at (3,0) (b0) {}; \node at (3,1) (b1) {}; \node at (3,2) (b2) {};
\node at (3,3) (b3) {}; \node at (3,4) (b4) {}; \node at (3,5) (b5) {}; 
\node at (3,6) (b6) {}; \node at (3,7) (b7) {}; \node[label=left:$c_2^8$] at (3,8) (b8) {}; 
\draw[thick] (a0) -- (a8) (b0) -- (b8) ; 
\draw[thick] (a0) -- (b3) ;
\draw[thick] (a3) -- (b6) ;
\draw (a1) -- (b6) ;
\draw (a0) -- (b4) (a0) -- (b5) (a0) -- (b6) (a0) -- (b7) (a0) -- (b8) ;
\draw (a1) -- (b7) (a1) -- (b8) ;
\draw (a2) -- (b6) (a2) -- (b7) (a2) -- (b8) ;
\draw (a3) -- (b7) (a3) -- (b8) ;
\end{tikzpicture}
\qquad
\begin{tikzpicture}[scale=0.38]
\scriptsize
\tikzstyle{every node}=[draw, shape=circle, minimum size=3pt,inner sep=0pt, fill=black]
\node[label=left:$c_1^0$] at (0,0) (a0) {}; \node at (0,1) (a1) {}; \node at (0,2) (a2) {};
\node at (0,3) (a3) {}; \node at (0,4) (a4) {}; \node at (0,5) (a5) {}; 
\node at (0,6) (a6) {}; \node at (0,7) (a7) {}; \node[label=left:$c_1^8$] at (0,8) (a8) {}; 
\node[label=left:$c_3^0$] at (6,0) (c0) {}; \node at (6,1) (c1) {}; \node at (6,2) (c2) {};
\node at (6,3) (c3) {}; \node at (6,4) (c4) {}; \node at (6,5) (c5) {}; 
\node at (6,6) (c6) {}; \node at (6,7) (c7) {}; \node[label=right:$c_3^8$] at (6,8) (c8) {}; 
\tikzstyle{every node}=[draw, shape=circle, minimum size=1.7pt,inner sep=0pt, fill=black]
\node[label=left:$c_2^0$] at (3,0) (b0) {}; \node at (3,1) (b1) {}; \node at (3,2) (b2) {};
\node at (3,3) (b3) {}; \node at (3,4) (b4) {}; \node at (3,5) (b5) {}; 
\node at (3,6) (b6) {}; \node at (3,7) (b7) {}; \node[label=left:$c_2^8$] at (3,8) (b8) {}; 
\draw[thick] (a0) -- (a8) (c0) -- (c8) ; 
\draw (b0) -- (b8) ;
\color{blue}
\draw[thick] (a1) -- (c4) ;
\draw[thick] (a4) -- (c7) ;
\draw (a2) -- (c7) ;
\draw (a0) to[bend right=3] (c4) (a0) -- (c5) (a0) to[bend right=3] (c6) (a0) -- (c7) (a0) to[bend right=3] (c8) ;
\draw (a1) to[bend right=3] (c5) (a1) -- (c6) (a1) to[bend right=3] (c7) (a1) -- (c8) ;
\draw (a2) to[bend right=3] (c8) (a3) to[bend right=5] (c7) (a3) -- (c8) (a4) to[bend right=5] (c8) ;
\end{tikzpicture}
\qquad
\begin{tikzpicture}[scale=0.38]
\scriptsize
\tikzstyle{every node}=[draw, shape=circle, minimum size=3pt,inner sep=0pt, fill=black]
\node[label=left:$c_1^0$] at (0,0) (a0) {}; \node at (0,1) (a1) {}; \node at (0,2) (a2) {};
\node at (0,3) (a3) {}; \node at (0,4) (a4) {}; \node at (0,5) (a5) {}; 
\node at (0,6) (a6) {}; \node at (0,7) (a7) {}; \node[label=left:$c_1^8$] at (0,8) (a8) {}; 
\node[label=right:$c_4^0$] at (9,0) (d0) {}; \node at (9,1) (d1) {}; \node at (9,2) (d2) {};
\node at (9,3) (d3) {}; \node at (9,4) (d4) {}; \node at (9,5) (d5) {}; 
\node at (9,6) (d6) {}; \node at (9,7) (d7) {}; \node[label=right:$c_4^8$] at (9,8) (d8) {};
\tikzstyle{every node}=[draw, shape=circle, minimum size=1.7pt,inner sep=0pt, fill=black]
\node[label=left:$c_3^0$] at (6,0) (c0) {}; \node at (6,1) (c1) {}; \node at (6,2) (c2) {};
\node at (6,3) (c3) {}; \node at (6,4) (c4) {}; \node at (6,5) (c5) {}; 
\node at (6,6) (c6) {}; \node at (6,7) (c7) {}; \node[label=right:$c_3^8$] at (6,8) (c8) {}; 
\node[label=left:$c_2^0$] at (3,0) (b0) {}; \node at (3,1) (b1) {}; \node at (3,2) (b2) {};
\node at (3,3) (b3) {}; \node at (3,4) (b4) {}; \node at (3,5) (b5) {}; 
\node at (3,6) (b6) {}; \node at (3,7) (b7) {}; \node[label=left:$c_2^8$] at (3,8) (b8) {}; 
\draw[thick] (a0) -- (a8) (d0) -- (d8) ; 
\draw (b0) -- (b8) (c0) -- (c8) ;
\color{green!60!black}
\draw[thick] (a2) to[bend right=5] (d5) ; 
\draw[thick] (a5) to[bend right=5] (d8) ; 
\draw (a0) -- (d5) (a0) to[bend right=3] (d6) (a0) -- (d7) (a0) -- (d8) ;
\draw (a1) -- (d5) (a1) to[bend right=3] (d7) (a1) -- (d6) (a1) -- (d8) ;
\draw (a2) -- (d6) (a2) to[bend right=3] (d8) (a2) -- (d7) (a3) -- (d8) (a4) -- (d8) ;
\end{tikzpicture}
$$
\caption{An illustration of the graph $G$ from Proposition~\ref{prop:propermix-to-tww-lower} for~$k=3$ and $h=4k-4=8$;
	the thick vertical chains form a clique each, and only $4$ out of all $2k+1=7$ chains are shown.
	The black slant edges depict those defined for $a=0$, the blue edges those for $a=1$ and the green edges those for $a=2$
	(the meaning of dashed/dotted edges is just to keep the picture tidy and not obscured by too many solid lines).
	The depicted pattern is cyclically shifted on consecutive $(k+1)$-tuples of the $2k+1$ chains.
	Below: a detail of the edge pattern between selected pairs of the chains.}
\label{fig:example-high-tww}
\end{figure}
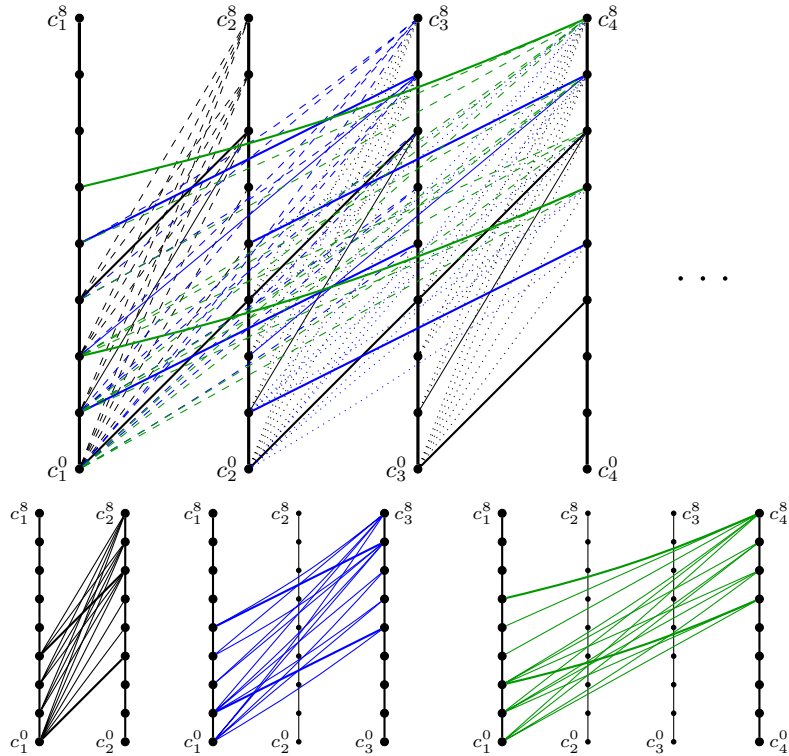
For $h\geq4k-4$, we construct the graph $G$ on $n=(2k+1)(h+1)$ vertices, where $V(G) = C_1\cup C_2\cup\ldots\cup C_{2k+1}$ and each $C_i=\{c_i^0,c_i^1,\ldots,c_i^h\}$ induces a clique.
In Figure~\ref{fig:example-high-tww}, we picture these cliques as the thick vertical chains.
When indexing these cliques, we take indices modulo $2k+1$, i.e., we declare $C_{2k+2}=C_1$, $C_{2k+3}=C_2$,\,\dots\
The edge set of $G$ is formed by the edges of these cliques, and by all following vertex pairs of~$V(G)$;
for $1\leq i\leq2k+1$, $0\leq a< k$ and $0\leq b <\lfloor(h-a)/k\rfloor$, we have $\{c_i^j,c_{i+a+1}^{j'}\}\in E(G)$,
if and only if $j\geq0$, $(b-1)k+a< j\leq bk+a$ and $(b+1)k+a\leq j'\leq h$.
For an illustration, see the slant edges in Figure~\ref{fig:example-high-tww}.

To prove that $G$ is inversion-free proper $(2k+1)$-mixed-thin, we use the partition $(C_1,\ldots,C_{2k+1})$, and define respective linear orders as follows;
on $C_i\cup C_{i'}$ where $i'=i+a+1$ (the indices are as above in the definition of $E(G)$\,), the order $\leq_{ii'}$ starts with the subsequence of vertices $c_{i'}^0,\ldots,c_{i'}^{k+a-1}$ as the least elements,
then we ``switch sides'' to continue with $c_i^{0},\ldots,c_i^{a}$, then with $c_{i'}^{k+a},\ldots,c_{i'}^{2k+a-1}$, then $c_i^{a+1},\ldots,c_i^{k+a}$, and so on \dots\ up to the highest elements $c_i^{h-k-2+a},\ldots,c_i^h$.
This clearly satisfies Definition~\ref{def:mixThin}.

To prove the lower bound on the twin-width of~$G$, we simply show that for every pair of vertices, one has at least $k$ neighbours which are not in the neighbourhood of the other
(and hence already the first contraction makes the red number high).
First consider two vertices $x,y$ coming from distinct chains; up to symmetry, we may assume that $x\in C_i$ and $y\in C_j$ where $j\in\{i+1,\ldots,i+k\}$ (modulo $2k+1$).
Recall that $x$ is adjacent to the rest of $C_i$ and $y$ to the rest of~$C_j$.
By the definition of $E(G)$ above, the $k$ vertices of $D=\{c_i^{h-k+1},\ldots,c_i^h\}\subseteq C_i$ are not in the neighbourhood of $C_j$, and so we are done unless $x\in D$.
In the latter case, we observe that $x$ has no neighbour in~$C_j$, which is also sufficient.

Now consider $x\not=y\in C_i$, such that $x=c_i^m$ and $y=c_i^{m'}$ where~$m'>m$. 
We may also assume $m\leq h-2k+1$, or we apply the symmetric argument (informally, in view of Figure~\ref{fig:example-high-tww}, with the graph ``turned upside down'').
For $a=m\!\mod k$, we observe $x$ has $k$ neighbours in $D'=\{c_{i+a+1}^{m+k},\ldots,c_{i+a+1}^{m+2k-1}\}$, but no vertex of $D'$ is a neighbour of~$y$.
We are again done.
\qed
\end{proof}

\section{Transduction Equivalence to Posets of Bounded Width}\label{sec:trans}

In relation to the deep fact~\cite{DBLP:conf/focs/Bonnet0TW20} that the class property of having bounded twin-width is preserved under FO transductions (cf.~Section~\ref{sec:prel}),
it is interesting to look at how our class of proper $k$-mixed-thin graphs relates to other studied classes of bounded twin-width.
In this regard we show that our class is nearly (note the inversion-free assumption!) transduction equivalent to the class of posets of bounded width.
We stress that the considered transductions here are always non-copying (i.e., not ``expanding'' the ground set of studied structures).

\begin{theorem}\label{theorem:from-posets}
The class of inversion-free proper $k$-mixed-thin graphs is a transduction of the class of posets of width at most $5\cdot \binom{k}{2} + 2k$.
For a given graph, together with the vertex partition and the orders as from Definition~\ref{def:mixThin}, the corresponding poset and its transduction parameters can be computed in polytime.
\end{theorem}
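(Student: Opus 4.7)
The plan is to construct, from the given inversion-free proper $k$-mixed-thin graph $G$ with partition $(V_1,\ldots,V_k)$ and orderings $\leq_{ij}$ as in Definition~\ref{def:mixThin}, a poset $P$ on a ground set $V(P)\supseteq V(G)$ of width at most $5\binom{k}{2}+5k$, equipped with a $k$-dependent family of unary marks on $V(P)$, so that $G$ arises as the image of $P$ under a single fixed non-copying transduction $T$. The central tool would be Lemma~\ref{lem:matrixboundaries}: for every $1\leq i\leq j\leq k$, the adjacency submatrix $\mx{A}_{i,j}(G)$ is diagonally trisected into three monochromatic parts (up to the main diagonal) separated by two non-crossing monotone lattice paths; this packages all cross-pair adjacency data into two boundary paths per pair plus the sign $E_{i,j}\in\{E,\bar E\}$.

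For each pair $(i,j)$ with $i\leq j$ I would attach to $P$ at most five chains of auxiliary elements: one encoding the interleaved order $\leq_{ij}$ (collapsing to $\leq_{ii}$ when $i=j$), two chains of ``step points'' encoding the two diagonal boundaries of $\mx{A}_{i,j}(G)$, and up to two further technical chains handling boundary endpoints and the exceptional entries on the main diagonal of $\mx{A}(G)$. Cross-comparabilities from each $v\in V_i\cup V_j$ to these auxiliaries would be defined so that a $\leq_P$-query against a boundary step point recovers on which side of the boundary the row or column of $v$ lies, and similarly for the position of $v$ in $\leq_{ij}$. The partition chain $(V_i,\leq_{ii})$ is absorbed into the five chains of the diagonal pair $(i,i)$, so the Dilworth decomposition of $P$ uses at most $5\cdot\binom{k+1}{2}=5\binom{k}{2}+5k$ chains, yielding exactly the claimed width bound. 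Polynomial-time constructibility is immediate because every element and every comparability is defined explicitly from the input data.

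The unary marks (bounded in number by a function of $k$) label the part index of each $v\in V(G)$, the pair $(i,j)$ and role (order/boundary/technical) of each auxiliary, and the sign $E_{i,j}$ of each pair. The FO formula $\varphi_E(u,v)$ decoding $E(G)$ then reads the part indices $i,j$ of $u,v$ from the marks, accesses the boundary step points of the pair $(i,j)$, determines which of the three monochromatic regions of $\mx{A}_{i,j}(G)$ contains the cell $(u,v)$ by comparing $u,v$ with those step points via $\leq_P$, and outputs adjacency according to the colour of that region, flipping the answer if $E_{i,j}=\bar E$. The main obstacle will be the joint design of the auxiliary chains and cross-comparabilities so that simultaneously $\leq_P$ is genuinely a partial order on $V(P)$, no hidden antichains inflate the Dilworth width beyond $5\binom{k+1}{2}$, and the region containing every cell $(u,v)$ is captured by a single uniform FO formula. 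The inversion-free hypothesis is essential here: it forces each $\leq_{ij}$ to restrict to $\leq_{ii}$ on $V_i$ and to $\leq_{jj}$ on $V_j$, so that the shared $V_i$-chain remains consistent with every per-pair auxiliary structure involving $V_i$ and the per-pair chain counts add up independently across pairs.
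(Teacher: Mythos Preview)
Your overall architecture matches the paper's: build a poset on $V(G)$ together with auxiliary elements, organised into roughly five chains per pair $(i,j)$ with $i\le j$, so that the width is $5\binom{k+1}{2}=5\binom{k}{2}+5k$, and recover adjacencies by an FO formula that reads off marks and compares against the auxiliaries. The chain count and the role of the inversion-free hypothesis are handled correctly.

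Where you diverge is in the encoding. You propose to work through Lemma~\ref{lem:matrixboundaries}, materialising the two diagonal boundaries of each $\mx A_{i,j}(G)$ as chains of \emph{step points} and deciding adjacency by locating the cell $(u,v)$ relative to these boundaries. The paper instead works directly from Definition~\ref{def:mixThin}: for each pair it introduces \emph{connector} elements, one sitting strictly between $l_x\in V_i$ and $u_x\in V_j$ in the poset, encoding either the $\leq_{ij}$-successor relation across parts (one chain per $i<j$) or the endpoints of inclusion-maximal \emph{homogeneous} subchains of $\leq_{ij}$, called border pairs (four chains per $i\le j$, split by which of $V_i,V_j$ the two endpoints lie in). The edge formula then asks whether $u$ and $v$ lie together inside some border-pair interval, which is a single existential step rather than a two-sided boundary comparison. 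The two encodings are dual---your boundary steps and the paper's border pairs describe the same combinatorial object---but the homogeneous-interval view makes the FO decoding more direct.

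The part you explicitly flag as the main obstacle (making the union of all per-pair comparabilities into a genuine partial order while keeping every auxiliary accessible as a cover pair) is precisely what the paper isolates and proves. Its solution is to let each connector $x$ have only two non-transitive relations, $l_x\sqsubseteq x\sqsubseteq u_x$, with an additional linear order $\sqsubseteq_{ij}$ among the connector centres of the same pair (comparable iff both joins move up), and then take the transitive closure. A short acyclicity argument shows this is a poset and, crucially, that the two joins of every connector remain cover pairs after closure, so the FO formula can recover $l_x$ and $u_x$ from $x$ alone. If you pursue the step-point variant you will need an analogous lemma; the paper's connector mechanism is the cleanest way to discharge it.
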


\begin{proof}
Let $G = (V,E)$ be an inversion-free proper $k$-mixed-thin graph. Let $\ca V = (V_1, \mathellipsis, V_k)$ be the partition of $V$ and $\leq_{ij}$ for $1 \leq i \leq j \leq k$ be the orders given by Definition~\ref{def:mixThin}. 
On a suitable ground set $X\supseteq V$ defined below, we are going to construct a poset $P = (X, \preceq)$ equipped with vertex labels ({\em marks}), such that the edges of $G$ will be interpreted by a binary FO formula within~$P$.
To simplify notation, we will also consider posets as special digraphs, and naturally use digraph terms for them.

For start, let $P_0 = (V, \preceq_0)$ be the poset formed by (independent) chains $V_1,\mathellipsis,V_k$, where each chain $V_i$ is ordered by $\leq_{ii}$. 
Let us denote by $V_{i,j}:=V_i\cup V_j$.

In order to define set $X$, we first introduce the notion of {\em connectors}.
Consider $1\leq i\leq j\leq k$, $X\supsetneq V$, a vertex $x\in X\setminus V$ and a pair $l_x\in V_i$ and $u_x\in V_j$. If $i=j$, we additionally demand~$l_x\lneq_{ii}u_x$.
If $\sqsubseteq_x$ is a binary relation (on $X$) defined by $l_x\sqsubseteq_x x\sqsubseteq_x u_x$, then we call $(x,\sqsubseteq_x)$ a {\em connector} with the {\em center $x$} and the {\em joins $l_xx$ and $xu_x$}.
(Note that it will be important to have $u_x$ from $V_j$ and not from $V_i$, wrt.~$i\lneq j$.)
We also order the connector centers $x\not=y$ with joins to $V_i$ and $V_j$ by $x\sqsubseteq_{ij}y$, if and only if $l_x\lneq_{ii}l_y$~and~$u_x\lneq_{jj}u_y$.
There may be more that one connector connecting the same pair of vertices.

Our construction relies on the following observation which, informally, tells us that connectors can (all together) encode some information about pairs of vertices of $V$ in an unambiguous way. 

\begin{claim2rep}\label{clm:getposet}
Recall $P_0 = (V, \preceq_0)$. Let $X\supsetneq V$ be such that each $x\in X\setminus V$ is the center of a connector, as defined above.
Let $\preceq_1$ be a binary relation on $X\supsetneq V$ defined as the reflexive and transitive closure of $(\preceq_0\cup\sqsubseteq)$ where
\mbox{$\sqsubseteq\,:=\big(\bigcup_{x\in X\setminus V}\sqsubseteq_x \!\big)$} $\cup\big( \bigcup_{1\leq i\leq j\leq k}\sqsubseteq_{ij} \!\big)$.
Then $P_1 = (X, \preceq_1)$ is a poset, \smallskip and each join of every connector $(x,\sqsubseteq_x)$ from $x\in X\setminus V$ is a cover pair in~$P_1$.
\end{claim2rep}
\begin{proof}[Subproof]
Let $D$ be the digraph on the vertex set $X$ and the arcs defined by the pairs in $(\preceq_0\cup\sqsubseteq)$.
A pair $(x,y)$ is in $\preceq_1$ if and only if there exists a directed path in $D$ from $x$ to~$y$.
It is routine to verify that $D$ is acyclic, in particular since there is no directed path starting in some $V_j$ and ending in $V_i$ where~$i<j$.
This implies that $\preceq_1$ is antisymmetric, and hence forming a poset.

For the second part, consider a connector $(x,\sqsubseteq_x)$ with the joins $l_xx$ and $xu_x$ to $V_i$ and $V_j$, and for a contradiction assume that $l_x\precneqq_1z\precneqq_1x$ for some~$z\in X$.
So, there exists a directed path $R$ in $D$ from $z$ to~$x$, and the only incoming arcs to $x$ are from $l_x\in V_i$ and from other connectors below $x$ in $\sqsubseteq_{ij}$.
If $R$ intersects $l_x$ or a vertex below $l_x$ in $\leq_{ii}$, we have a contradiction with the acyclicity of $D$. Otherwise all vertices of $R$ are connectors between $V_i$ and $V_j$ below $x$, which is again a contradiction with $l_x\precneqq_1 z$.
The case with possible $x\precneqq_1z\precneqq_1u_x$ is finished symmetrically.
\subqed
\end{proof}

We continue with the construction of the poset $P$ encoding $G$; this is done by adding suitable connectors to $P_0$, and marks $\bss$, $\bvi$, $\bbij$, or $\bcij$.
To explain, $\bss$ stands for successor (cf.~$\leq_{ij}$), $\bvi$ stands for the part $V_i$, $\bbij$ means a border-pair (to be defined later in $G[V_{i,j}]$), and $\bcij$ stands for complement (cf.~$E_{i,j}=\bar E$).
\begin{enumerate}
\item We apply the mark $\bvi$ to every vertex of each part~$V_i\in\ca V$.
\item For each $1\leq i< j\leq k$, and every pair $(v,w)\in V_i\times V_j$ such that $w$ is the immediate successor of $v$ in $\le_{ij}$,
	we add a connector with a new vertex $x$ marked $\bss$ and joins to $l_x=v$ and $u_x=w$.
	Note that one could think about symmetrically adding connectors for $w$ being the immediate predecessor, but these can be uniquely recovered from the former connectors.
\item For $1\leq i\leq j\leq k$ and $v,w\in V_{i,j}$, let $V_{i,j}[v,w]:=\{x\in V_{i,j}: v\leq_{ij}x\leq_{ij}w\}$ be a consecutive subchain, and
	call the set $V_{i,j}[v,w]$ {\em homogeneous} if, moreover, every pair of vertices between $V_i\cap V_{i,j}[v,w]$ and $V_j\cap V_{i,j}[v,w]$ is an edge in~$E_{i,j}$.
	(In particular, for $i=j$, homogeneous $V_{i,i}[v,w]$ means a clique in $G$ if $E=E_{i,i}$ or an independent set of $G$ otherwise.)
	If $V_{i,j}[v,w]$ is an inclusion-maximal homogeneous set in~$V_{i,j}$, then we call $(v,w)$ a {\em border pair} in $V_{i,j}$, and
	we add a connector with a new vertex $x$ marked $\bbij$ and joins to $v$ and $w$.
	Specifically, it is $l_x=v$ and $u_x=w$, unless $v\in V_j$ and $w\in V_i$ in which case $l_x=w$ and $u_x=v$.
\item For $1\leq i\leq j\leq k$, if $E_{i,j}=\bar E$, then we mark just any vertex by $\bcij$.
\end{enumerate}

Now we define the poset $P = (X, \preceq)$, where the set $X\supseteq V$ results from adding all marked connector centers defined above to $P_0 = (V, \preceq_0)$,
and $\preceq$ is the transitive closure of $(\preceq_0\cup\sqsubseteq)$ as defined in Claim~\ref{clm:getposet} for the added connectors.
See a simple example in Figure~\ref{fig:from-posets}.
\begin{figure}[t]
\centering
\includegraphics[page=1, width=0.55\hsize]{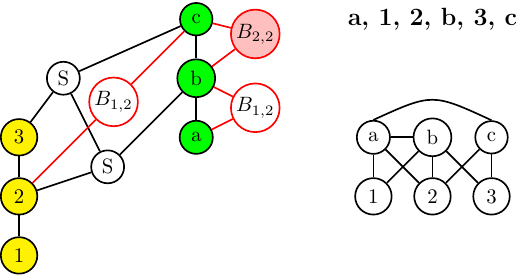}
\caption{An example of an inversion-free proper $2$-mixed-thin graph $G$ (bottom right), with parts $V_1 = \{1,2,3\}$ and $V_2 = \{a,b,c\}$. The ordering $\leq_{1,2}$ is top right ($\leq_{1,1}$ and $\leq_{2,2}$ are implied because the graph is inversion-free). $E_{1,2} = E_{1,1} = E$, and $E_{2,2} = \bar E$.\\
On the left, there is a Hasse diagram of a poset representing $G$. Vertices marked by $\boldsymbol{V_1}$ are coloured yellow, vertices marked by $\boldsymbol{V_2}$ are coloured green, and vertices marked by $\boldsymbol{C_{2,2}}$ are coloured pink (this mark encodes that $E_{2,2} = \bar E$, and we used it to mark an arbitrary vertex). Other marks (i.e., $\bss$, $\boldsymbol{B_{1,2}}$ or $\boldsymbol{B_{2,2}}$) are written inside the vertices.}
\label{fig:from-posets}
\vspace*{-10pt}
\end{figure}

First, we claim that $P$ with the applied marks uniquely determines our starting graph~$G$.
Notice that, for each connector center $x\in X\setminus V$, the (unique) cover pairs of $x$ to and from respective $V_i$ and $V_j$,
by Claim~\ref{clm:getposet}, determine the joins~of~$x$.

The vertex set of $G$ is determined by the marks $\bvi$,~$i=1,\ldots,k$.
For $1\leq i\leq j\leq k$, the linear order $\leq_{ij}$ is directly determined by $\preceq$ if $i=j$, and otherwise the following holds.
For $v\in V_i$ and $w\in V_j$, we have $v\leq_{ij}w$ if and only if there exists a connector $x$ marked $\bss$ with joins to $l_x\in V_i$ and $u_x\in V_j$ such that $v\preceq l_x$ and $u_x\preceq w$.
For $v\in V_j$ and $w\in V_i$, we have $v\leq_{ij}w$ if and only if $w\not\leq_{ij}v$.

To determine the edge set of~$G$, we observe that Definition~\ref{def:mixThin} shows that every edge $f$ (resp. non-edge) of $G[V_{i,j}]$  is contained in some homogeneous consecutive subchain of $\leq_{ij}$.
Hence $f$ is contained in some maximal such subchain, and so determined by some border pair in $V_{i,j}$ which we recover from its connector marked $\bbij$ using the already determined order~$\leq_{ij}$. 
We then determine whether $f$ means an edge or a non-edge in $G$ using the mark $\bcij$.

Finally, we verify that the above-stated definition of the graph $G$ within $P$ can be expressed in FO logic.
We leave the technical details for the next claim:

\begin{claim2rep}
The transduction described in the proof of Theorem~\ref{theorem:from-posets} can be defined by FO formulae on the marked poset~$P$.
\end{claim2rep}
\begin{proof}[Subproof]
\newcommand{\vertex}{\varphi_0}
We start with the vertex formula $\vertex(u)$ (which is satisfied if and only if $u \in V$).
Then, we continue with a sequence of auxiliary formulae, leading us to the edge formula $\varphi_E(u,v)$ (which is satisfied if and only if $uv \in E$). Many of the auxiliary formulae are parameterized by $i$ and $j$ for $1 \leq i,j \leq k$.

\[\vertex(u) ~\equiv \bigvee_{1 \leq i \leq k} \bvi(u)\]

The next formula simply says that the vertices $u$ and $v$ both belong to the part $V_i$, and that $u$ is smaller than $v$ in the internal ordering of $V_i$ (i.e., $\leq_{ii}$).

\[u \leq_{ii} v ~\equiv~ \bvi(u) \land \bvi(v) \land u \preceq v \]

The next formula says that $w$ is a center of a connector (see the definition above) connecting $u \in V_i$ and $v \in V_j$. The second line states that $(u,w)$ and $(w,v)$ are cover pairs.
\vspace*{-2pt}
\begin{equation*}
\begin{split}
Connect_{ij}(u,v,w) ~\equiv\> &\;\bvi(u) \land \bvj(v) \land \neg \vertex(w)\; \land
                             u \preceq w \land w \preceq v\; \land \\
                             &\neg \exists x\; (u \precneqq x \land x \precneqq w)\; \land
                             \neg \exists x\; (w \precneqq x \land x \precneqq v)\\
\end{split}
\end{equation*}

These three formulae decode the ordering $\leq_{ij}$, using the connectors marked by $\bss$. `$u \leq_{ij}' v$' is an auxiliary formula, which is equivalent to `$u\leq_{ij}v$' only if $u \in V_i$ and $v\in V_j$.
\begin{align*}
Succ_{ij}(u,v) ~\equiv~ \; &\exists w\; (Connect_{ij}(u,v,w) \land \bss(w))\\
u \leq_{ij}' v ~\equiv~ \; &\exists u^+, v^- \;( u \leq_{ii} u^+ \land v^- \leq_{jj} v \land Succ(u^+, v^-))  
\end{align*}
\vspace{-10pt}
\begin{equation*}
\begin{split}
u \leq_{ij} v ~\equiv\, \;  u \leq_{ii} v \lor u \leq_{jj} v &\lor (\bvi(u) \land \bvj(v) \land u \leq_{ij}' v)\\ &\lor (\bvj(u) \land \bvi(v) \land \neg (v \leq_{ij}' u))
\end{split}
\end{equation*}

\medskip
This formula says that $(u,v) \in V_{i,j}^{~2}$ is a border-pair, see the definition above.
\vspace*{-2pt}
\begin{equation*}
\begin{split}
BorderPair_{ij}(u,v) ~\equiv~ \; &u \leq_{ij} v \land \exists w\;\big[\bbij(w)\; \land\\
\big[&(\bvi(u) \land \bvi(v) \land Connect_{ii}(u,v,w))\;\lor \\
 &(\bvj(u) \land \bvj(v) \land Connect_{jj}(u,v,w))\;\lor\\
 &(\bvi(u) \land \bvj(v) \land Connect_{ij}(u,v,w))\;\lor\\
 &(\bvj(u) \land \bvi(v) \land Connect_{ij}(v,u,w))\big]\big]
\end{split}
\end{equation*}

\medskip
The formula $\psi$ says that there is an edge in $G$ between vertices $u$ and $v$ such that $u \in V_i$ and $v \in V_j$ for some $1\leq i, j \leq k$, and $u \leq_{ij} v$. Thus, it is almost the edge formula, except that it may be satisfied for $u = v$, and it may not be satisfied after swapping $u$ and $v$. These issues are fixed by the formula $\varphi_E$ itself.

Intuitively, $\psi(u,v)$ means that $u$ and $v$ are in the homogeneous set defined by the border-pair $(u^-,v^+)$, which by the definition of a border-pair means that $uv$ is an edge in $G$ (or a non-edge, depending on the respective $E_{ij}$, which is why the $\exists x\; (\bcij(x))$ part is there). The symbol $\oplus$ stands for the exclusive disjunction.
\begin{equation*}
\begin{split}&\psi(u,v) ~\equiv \bigvee_{1\leq i, j \leq k} \bvi(u) \land \bvj(v)\; \land u \leq_{ij} v\; \land\\
                           &\qquad\big(\exists x\; (\bcij(x)) \oplus \exists u^-, v^+ (u^- \leq_{ij} u \land  v \leq_{ij} v^+ \land BorderPair_{ij}(u^-,v^+))\big)
\end{split}
\end{equation*}

\[\varphi_E(u,v) ~\equiv~ u \neq v \land (\psi(u,v) \lor \psi(v,u))
\eqno{\subqed}  \]
\end{proof}

\smallskip
\ifx\proof\inlineproof
Continuing in the proof of Theorem~\ref{theorem:from-posets}, we now compute the width of $P$. 
\else
Second, we compute the width of $P$. 
\fi
In fact, we show that $P$ can be covered by a small number of chains. There are the $k$ chains of~$V_1,\ldots,V_k$.
Then, for each pair $1\leq i < j\leq k$, we have one chain of the connector centers marked $\bss$ from $V_i$ to $V_j$,
and four chains of the connector centers marked $\bbij$, sorted by how their border pairs fall into the sets $V_i$ or~$V_j$ (they are indeed chains because border pairs demarcate maximal homogeneous sets), thus $5\cdot \binom{k}{2}$ chains. Finally, there is a chain of the connector centers marked $\mx {B_{ii}}$ for each $1 \le i \le k$.
To summarize, there are $k+ 5\cdot \binom{k}{2} + k$ chains covering whole~$P$.

\smallskip
Efficiency of the construction of marked poset $P$ from given (already partitioned and with the orders) graph $G$ is self-evident.
The whole proof of Theorem~\ref{theorem:from-posets} is now finished.
\qed
\end{proof}

Now we prove the converse statement to Theorem~\ref{theorem:from-posets}, i.e., we show how to represent posets in inversion-free proper mixed-thin graphs.

\begin{theorem2rep}\label{theorem:to-posets}
The class of posets of width at most $k$ is a transduction of the class of inversion-free proper $(2k+1)$-mixed-thin graphs.
For a given poset, a corresponding inversion-free proper $(2k+1)$-mixed-thin graph can be computed in polytime.
\end{theorem2rep}
\begin{proof}
Let $P = (X,\preceq)$ be a poset of width $k$. Let us fix a partition $(C_1,\mathellipsis,C_k)$ of $P$ into $k$ chains. We construct a graph $G = (V,E)$ augmented by marks $\baa$, $\bbb$, $\boo$ and $\bci$ for $1 \leq i \leq k$, and we show that $P$ can be interpreted in $G$, and that $G$ is an inversion-free proper ($2k+1$)-mixed-thin graph. Let us denote by $h$ the length of the longest chain in $P$, i.e., $h = \max(|C_1|,\mathellipsis,|C_k|)$.

For $1 \leq i \leq k$ and for each $v \in C_i$, there are two vertices $a_v$ and $b_v$ in $V$; both of them are marked by $\bci$, $a_v$ is marked by $\baa$, and $b_v$ is marked by $\bbb$. Furthermore, there are $h$ additional vertices in $V$, which we denote $O = \{o_1,\mathellipsis,o_h\}$. These additional vertices are marked by $\boo$, and they are used to encode the (internal) orderings of the chains. Now let us define the edge set $E$ (see Figure~\ref{fig:to-posets}):

\begin{figure}[t]
\centering
\includegraphics[page=1,scale=1.05]{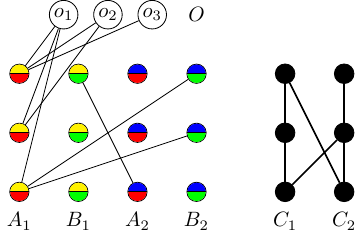}
\caption{On the right, there is a Hasse diagram of a poset $P$, partitioned into two chains, $C_1$ and $C_2$. On the left, there is an inversion-free proper 5-mixed-thin graph representing $P$. Vertices coloured red are marked by $\baa$, vertices coloured green are marked by $\bbb$, vertices coloured yellow are marked by $\boldsymbol{C_1}$, and vertices coloured blue are marked by $\boldsymbol{C_2}$. The edges going from $O$ to $B_1$, $A_2$ and $B_2$ are omitted (they are similar to the edges between $O$ and $A_1$).}
\label{fig:to-posets}
\end{figure}

\begin{itemize}
\item For $1 \leq i \leq k$, $v \in C_i$ and $1 \leq m \leq h$, $\{o_m, a_v\} \in E$ and $\{o_m, b_v\} \in E$ if and only if $m \leq |\{u \in C_i : u \preceq v\}|$.
\item For $1 \leq i \neq j \leq k$, $u \in C_i$ and $v \in C_j$, $\{a_u,b_v\} \in E$ if and only if $u \preceq v$.
\item There are no other edges in $E$.
\end{itemize}

Now let us prove that $G$ is an inversion-free proper ($2k+1$)-mixed-thin graph. Let the partition required by Definition~\ref{def:mixThin} be $\mpp = (V_1 = O, V_2= A_1,\mathellipsis,V_{k+1}=A_k,V_{k+2}=B_1,\mathellipsis,V_{2k+1}=B_k)$ where $A_i = \{a_v: v \in C_i\}$ and $B_i = \{b_v: v \in C_i\}$ for $1 \leq i \leq k$. Now we define the ordering $\leq_{rs}$ for $1\leq r,s \leq 2k+1$.

\begin{itemize}
\item If $r=s$, then for $V_r = O$, $o_i \leq_{rr} o_j$ if and only if $i \leq j$ (for all $1\leq i,j \leq h$), and for $V_r = A_i$ (resp. $V_r = B_i$) for some $1\leq i \leq k$, $a_u \leq_{rr} a_v$ (resp. $b_u \leq_{rr} b_v$) for $u,v \in X$ if and only if $u \preceq v$.
\item If $V_r = O$, then $\leq_{rs}$ is obtained by alternately taking vertices of $O$ and $V_s$, starting with $o_1$. For example, if $V_s = A_i$ for some $1\leq i\leq k$ and $C_i$ ordered by $\preceq$ equals $(v_1,\mathellipsis,v_n)$, then $O \cup A_i$ ordered by $\leq_{rs}$ is $(o_1, a_{v_1}, o_2,\mathellipsis, a_{v_n},\\o_{n+1}, o_{n+2},\mathellipsis,o_h)$.
\item If $V_r= A_i$ and $V_s=B_j$ for $1 \leq i \neq j \leq k$, then for $u \in C_i$ and $v \in C_j$, $a_u \leq_{rs} b_v$ if and only if $u \preceq v$ (note that if $u$ and $v$ are incomparable in $\preceq$, then $b_v \leq_{rs} a_u$ because $\leq_{rs}$ is total). 
\item Otherwise, $\leq_{rs}$ is irrelevant because there are no edges between $V_r$ and $V_s$.
\end{itemize}

Now we prove that these orderings satisfy Definition~\ref{def:mixThin}. Let $1\leq r, s \leq 2k+1$. We set $E_{r,s} = E$. Observe that for $u \in V_r$ and $w \in V_s$, $u \leq_{rs} w$ fully determines if $uw \in E$ or not, i.e.:

\begin{itemize}
\item for $V_r = O \neq V_s$, or $V_r=A_i$ and $V_s=B_j$ for $1 \leq i \neq j \leq k$, $u \leq_{rs} w$ if and only if $uw \in E$.
\item for $V_s = O\neq V_r$, or $V_r=B_i$ and $V_s=A_j$ for $1 \leq i \neq j \leq k$, $u \leq_{rs} w$ if and only if $uw \notin E$.
\item for other choices of $r$ and $s$, we know that $uw \notin E$.
\end{itemize}

This observation immediately implies that $G$ is a proper $(2k+1)$-mixed-thin graph because for $u \leq_{rs} v \leq_{rs} w$ and $u,v \in V_r$, $w \in V_s$ (resp. $u \in V_r$; $v,w \in V_s$), it may not occur that $uw \in E$ and $vw \notin E$ (resp. $uv \notin E$).

\medskip
Finally, we show that $P$ can be interpreted in $G$. The first auxiliary formula, $SameChain(u,v)$, is satisfied if $u$ and $v$ represent elements of $P$ of the same chain. The second auxiliary formula, $u \leq_{internal} v$, expresses the internal orderings of the chains (it may be satisfied even if $u$ and $v$ represent elements of different chains but it does not matter since we use $\leq_{internal}$ only if $SameChain(u,v)$ is satisfied, see $Twins(u,v)$ and $u \preceq v$). The third auxiliary formula, $Twins(u,v)$, is satisfied if $u$ and $v$ represent the same element of $P$, i.e., $u,v \in \{a_x,b_x\}$ for some $x \in X$.

$\varphi_0(u)$ says which vertices of $G$ are elements of $P$ (note that if we used $\bbb$ instead of $\baa$, the interpretation would give the same poset). Finally, $u \preceq v$ expresses the partial order of $P$.
\begin{equation*}
\begin{split}
SameChain(u,v) &~\equiv \bigvee_{1\leq i \leq k} \bci(u) \land \bci(v)\\
u \leq_{internal} v &~\equiv~ \forall w\; ((\boo(w) \land wu \in E) \rightarrow wv \in E)\\
Twins(u,v) &~\equiv~ SameChain(u,v) \land u \leq_{internal} v \land v \leq_{internal} u\\
\varphi_0(u) &~\equiv~ \baa(u)
\end{split}
\end{equation*}
\begin{equation*}
\begin{split}
u \preceq v ~\equiv~\; &(SameChain(u,v) \rightarrow u \leq_{internal} v)\;\land\\
&(\neg SameChain(u,v) \rightarrow\\
&\exists u',v' (Twins(u,u') \land Twins(v,v') \land \baa(u') \land \bbb(v') \land u'v' \in E))\\
\end{split}
\end{equation*}

\qed

\end{proof}

\section{The Red Potential Method}\label{sec:red-potential}

In the proof of Theorem~\ref{thm:propermix-to-tww}, as well as in \cite{DBLP:conf/iwpec/BalabanH21} before,
we have applied a useful proof technique estimating the average increase in red degrees over a selected subset of candidate contractions.
The purpose of this section is to introduce this technique in a general formulation, in hope that it will find its applications
in proving efficiently bounded twin-width of other classes.
We also outline some of the limits of applicability of this technique.

To approach the technique formally, we are going to define a red-potential property,
which will subsequently be used to efficiently obtain a desired contraction sequence (which is not necessarily optimal, but has a guaranteed red value).
This application, described by Proposition~\ref{prop:red-potential-method-algorithm}, is what we call the {\em Red potential method}.

\begin{definition}[Red potential]\label{def:red-potential-method}
    Let $\mx M$ be a symmetric {$(n \times n)$-matrix} with entries from a finite set containing the red entry $r$.
    We denote by $V(\mx M)$ the set of rows (or equivalently columns) of $\mx M$, and let $\preceq$ be an arbitrary linear order on $V(\mx M)$.
    For a subset $R\subseteq V(\mx M)$, we denote by $S_R^\preceq$ the successor (cover) relation of $\preceq$ restricted to~$R$,
    i.e., $S_R^\preceq$ ``makes'' a directed increasing path on~$R$.

    Denote by $P_{\mx M}(u, v)$ the number of red entries of the row created by contracting rows $u$ and $v$ in $\mx M$.
    The {\em red potential of the set $R$ in $\mx M$ with respect to~$\preceq$} is defined as
    $P^\preceq_{\mx M,R} = \sum_{(u, v) \in S_R^\preceq} P_{\mx M}(u, v)$.
    We shortly say the {\em red potential of $R$} when $\mx M$ and $\preceq$ are clear.
    
    If $G$ is an $n$-vertex graph with the vertex set $V(G)$ ordered by $\preceq$,
    then the {\em red potential of $R\subseteq V(G)$ in $G$ with respect to $\preceq$} is simply the red potential of
    $R$ in the adjacency matrix $\mx M:=\mx A(G)$ with respect to $\preceq$.
    Here we do not distinguish between vertices of $G$ and the corresponding rows (resp. columns) of $\mx M$, that is, $V(\mx M)=V(G)$.

\smallskip
    We say that such an $(n \times n)$-matrix $\mx M$ with a linear order $\preceq$ on $V(\mx M)$
    has the {\em$(k,\ell)$-red-potential property} if $n \le \ell$, or if all of the following hold:
    \begin{enumerate}
        \item\label{item:rpm-max-red-degree}
        the number of red entries (the red degree) in any row of $\mx M$ is at most $\ell$,
        \item\label{it:rpaverage}
	there exists a subset $R\subseteq V(\mx M)$ such that the red potential of $R$ in $M$ with respect to $\preceq$
	is at most $(k+1)(|R|-1)-1$, and
        \item\label{it:rprecur}
	for every pair of rows $(u, v) \in S_R^\preceq$ such that $P_M(u, v) \le k$;
	if $\mx M'$ denotes the matrix obtained from $\mx M$ by contracting the rows $u$ with $v$ and then contracting the corresponding columns,
	then $\mx M'$ with the order inherited from $\preceq$ on $V(\mx M')$ has again the $(k,\ell)$-red-potential property.
    \end{enumerate}
\end{definition}

Definition~\ref{def:red-potential-method} deserves several important comments.
First, note that, in the $(k,\ell)$-red-potential property, condition \ref{it:rprecur}. immediately implies that $k\leq\ell$ is necessary.
However, it may not be apriori clear why we need both $k$ and $\ell$ in the definition.
This is best illustrated by the proof of Theorem~\ref{thm:propermix-to-tww}; while condition \ref{it:rpaverage}. guarantees that there
is a row pair $u,w$ to contract, such that the resulting red degree of the contracted row $w$ is at most $k$, 
this does not mean that in subsequent row contractions the corresponding column contractions do not increase the red degree of~$w$,
and the generally larger bound on the red degree $\leq\ell$ captures this possibility.

Second, observe that whenever $\mx M$ has the symmetric twin-width at most~$t$, then, for a suitable order $\preceq$
inherited from the assumed $t$-contraction sequence of $\mx M$, the $(t,t)$-red-potential property is satisfied by $\mx M$
(simply; we always choose as $R$ the pair of rows which is to be contracted in the assumed sequence).
However, this is of not much help since we are primarily interested in efficient ways of finding a bounded contraction sequence.
One should thus view Definition~\ref{def:red-potential-method} in a way that both the given order $\preceq$ and the choice(s)
of the set $R$ in condition~\ref{it:rpaverage}. somehow ``naturally'' follow from a given presentation of the graph~$G$
(which was the case of both \cite{DBLP:conf/iwpec/BalabanH21} and Theorem~\ref{thm:propermix-to-tww}).

Third, we comment a bit on the ``suitable order $\preceq$'' on $V(\mx M)$.
While \cite{DBLP:journals/jacm/BonnetKTW22} prove that a matrix $\mx M$ has bounded (symmetric) twin-width, if and only if
there exists a linear order on $V(\mx M)$ such that $\mx M$ does not contain a certain rather simple obstruction
(so-called mixed minor) with respect to~$\preceq$, the efficient construction accompanying this result is very complicated,
and generally raises the twin-width to a double-exponential function of the obstruction size.
Therefore, even only in special cases, it is valuable to have a straightforward method to construct contraction sequences
of reasonably small red degree, such as the one coming from Definition~\ref{def:red-potential-method} and described in the next statement:

\begin{proposition}\label{prop:red-potential-method-algorithm}
    Let $\mx M$ be a symmetric matrix, and $\preceq$ be a linear order on $V(\mx M)$.
    If $\mx M$ with $\preceq$ has the $(k,\ell)$-red-potential property for some $k\leq\ell$,
    then there is a symmetric $\ell$-contraction sequence of $\mx M$.
    Moreover, if the choice of the set $R$ in condition~\ref{it:rpaverage}. of Definition~\ref{def:red-potential-method}
    can be done in polynomial time, then an $\ell$-contraction sequence of $\mx M$ can also be found in polynomial time.
\end{proposition}
\begin{proof}
    This is straightforward by induction on~$n$:

    If $n \le \ell$, then any contraction sequence is fine.
    Otherwise, we choose the set $R\subseteq V(\mx M)$ as in condition~\ref{it:rpaverage}., and we know that
    $P^\preceq_{\mx M,R}\leq (k+1)(|R|-1)-1$.
    By the pigeon-hole principle, we thus have a successive pair $(u, v) \in S_R^\preceq$ such that $P_M(u, v) \le k$,
    and we choose such pair $u,v$ minimizing $P_M(u, v)$ over all $|R|-1$ possibilities in~$R$.
    After contracting $u$ and $v$, we obtain an $((n-1)\times(n-1))$-matrix $M'$ which has the $(k,\ell)$-red-potential property.
    We finish the desired sequence from $\mx M'$ by induction.
\qed\end{proof}

It is natural to ask whether all requirements of Definition~\ref{def:red-potential-method} are necessary.

We first present an indirect evidence that the recursive requirement of the red-potential property (condition~\ref{it:rprecur})
is necessary when one aims to use red potential to find full contraction sequence:

Let $\mx M$ be a symmetric $(n \times n)$-matrix ordered by $\preceq$, and choose $R \subseteq V(M)$. 
Assume that the red potential of $R$ in $\mx M$ is linearly bounded in $|R|$ (which implies
that there is an available consecutive contraction of constant red degree).
Let us attempt to create a contraction sequence for $\mx M$ by iteratively contracting pairs of consecutive rows
$u$ and $v$ (and the corresponding columns) such that $P_{\mx M}(u, v) \le c$ for some constant $c$.

Assume that we succeed, that is, we have contracted $R$ to a bounded number of rows. 
We still need to contract the rows $V(\mx M) \setminus R$. If we do not have any knowledge about
them, then the best we can do is an arbitrary symmetric contraction
sequence, therefore we need that $|V(\mx M) \setminus R|$ is bounded by constant. 
However, we might as well insist that $|V(\mx M) \setminus R| = 0$, since adding constant number of
rows to $R$ and obtaining $R'$ preserves that red potential of $R'$ in $\mx M$ is linear in $|R'|$.

We now show that such optimistic approach must fail:

\begin{proposition}
    There exists a class of graphs $\mathcal{B}$ such that 
    the twin-width of $\mathcal{B}$ is unbounded, and
    for every $3n$-vertex graph $G \in \mathcal{B}$ there exists
    an ordering $\preceq$ such that the red potential of $V(G)$
    in $G$ with respect to $\preceq$ is at most $10n$.
\end{proposition}
\begin{proof}
    Let $n \ge 1$.
    Let $\sigma_0$ and $\sigma_2$ be permutations on $n$ elements.
    Consider graph $G_{\sigma_0, \sigma_2}$ such that
    $V(G_{\sigma_0, \sigma_2}) = \{0,1,2\}\times\{1,2,\ldots,n\}$ and 
    $E(G_{\sigma_0, \sigma_2}) = \{(1, j)(k, i) : k \in\{0,2\}, 
                                                  \sigma_k(i) < j\}$.
    
    Denote by $\sigma_1$ the identity permutation, that is, 
    $\sigma_1(i) = i$ for all $i$.
    Consider ordering $\preceq$ such that
    $(i, j) \preceq (k, \ell)$ iff $i < k$ or 
    $i = k \land \sigma_i(j) \le \sigma_k(\ell)$.

    Let $\mx{A}_\preceq(G)$ be the adjacency matrix of $G$ ordered by $\preceq$.
    Let $k \in \{0, 2\}$ and observe that, for any two vertices $(k, i)$
    and $(k, j)$ such that $\sigma_k(i) = \sigma_k(j) + 1$, the 
    neighborhoods of $(k, i)$ and $(k, j)$ differ by at most one vertex.
    Furthermore, $(k, i)(k, j) \not\in E(G)$, hence
    $P_{\mx{A}_\preceq(G)}((k, i), (k,j)) \le 1$.
 
    Similarly, for any two vertices $(1, i)$ and $(1, i + 1)$, the 
    neighborhoods of $(1, i)$ and $(1, i+1)$ differ
    by at most two vertices.
    Furthermore, $(1,i)(1,i+1) \not\in E(G)$, hence
    $P_{\mx{A}_\preceq(G)}((1, i), (1,i+1))\le2$.

    There remain only two pairs of vertices that are in the successor relation
    in $\preceq$, and for each such pair $(u, v)$ we get that
    $P_{\mx{A}_\preceq(G)}(u, v) \le |V(G_{\sigma_0, \sigma_2})|=3n$.

    Together, we obtain that the red potential of $G$ with
    respect to $\preceq$ is at most $n+n+2n+2\cdot3n=10n$.

\smallskip
    Consider the class $\mathcal{B} = \{ G_{\sigma_0, \sigma_2} :
    \sigma_0\text{ and }\sigma_2$ are permutations on the same number of elements$\}$.
    Bonnet et al. \cite{DBLP:conf/soda/BonnetGKTW21} show that
    the class $\mathcal{B}$ has unbounded twin-width.
    Hence the class $\mathcal{B}$ has the desired properties.
\qed\end{proof}

Second, we address the question of whether the condition of recursively having the red degree
of $\mx M$ at most $\ell$ is truly necessary in the current form.
While we have already argued on the example of the proof Theorem~\ref{thm:propermix-to-tww}, that 
a bound generally larger than the bound of $k$ coming from the red-potential property is needed for $\mx M$,
it could still be possible that, whenever the red potential stays bounded as in Definition~\ref{def:red-potential-method} 
along the whole recursive procedure, the red degree of $\mx M$ stays bounded from above by a function of~$k$.

Again, this is not the case.
In nutshell, even if contractions of selected row pairs always result in rows with bounded number of red entries, 
the corresponding column contractions can increase the number of red entries in other rows beyond any control.
We show this under an additional requirement that $R=V(\mx M)$:

\begin{proposition}
    For every $\ell \ge k \ge 3$ there exists $n$ and an ordered $(n \times n)$-matrix
    $\mx M$ such that:
    \begin{itemize}
        \item[(a)] for every symmetric contraction sequence $\mx M= \mx M_n,\ldots,\mx M_{1}$
        and every $n \ge i \ge \ell$ we have that the red potential of $V(\mx M_i)$ in 
        the $(i \times i)$-matrix $\mx M_i$ is at most $3\cdot|V(\mx M_i)|$ with respect to any linear order; and
        \item[(b)] there is a symmetric contraction sequence $\mx M=\mx M_n,\ldots,\mx M_{1}$
        such that:
        \begin{itemize}
            \item[(i)] for all $i$, the matrix $\mx M_i$ has been created from $\mx M_{i+1}$
            by contracting consecutive rows $u$ and $v$ (and the corresponding 
            columns) such that $P_{\mx M_{i+1}}(u, v) \le k$,
            \item[(ii)] there exists $i$ such that the matrix $\mx M_i$ contains
            a row with more than $\ell$ red entries, and
            \item[(iii)] one could obtain this contraction sequence by applying
            the algorithm from Proposition~\ref{prop:red-potential-method-algorithm}
            when we choose $R$ to be $V(\mx M')$ for each matrix $\mx M'$ in the sequence.
            Note that, preconditions of the algorithm might not be satisfied.
        \end{itemize}
    \end{itemize}
\end{proposition}
\begin{proof}
    Let $n = 2\ell + 3$. Consider a graph $G$ such that
    $V(G) = \{v_1,v_2,\ldots,v_n\}$ and
    $E(G) = \{v_1v_{2k} : 1 \le k \le \frac{n-1}{2} \}$.
    Furthermore consider ordering $v_i \preceq v_j \iff i \le j$.
    Let $\mx M = \mx{A}_\preceq(G)$ be the adjacency matrix of $G$ ordered by $\preceq$.

    First, we prove that $\mx M$ satisfies (a). We consider matrix $\mx M'$ obtained 
    from $\mx M$ by any symmetric contraction sequence.
    Let $\preceq'$ be any linear order on $V(\mx M')$. At most two pairs of rows in
    $S^{\preceq}_{V(\mx M')}$ contain $v_1$ (or the row created by
    contracting $v_1$ with other rows). The remaining rows have only one non-zero
    entry. Therefore the red potential of $V(\mx M')$ in $\mx M'$ is 
    at most $2\cdot|V(\mx M')| + |V(\mx M')|\cdot1$.

    Second, we prove that $\mx M$ ordered by $\preceq$ satisfies (b).
    Notice that $P_{\mx M}(v_1, v_2) \ge \ell$, since vertex $v_{2i}$ is a neighbor
    of $v_1$ but not of $v_2$ for every $2 \le i \le \ell+1$.
    
    Consider a pair of rows $v_{i}, v_{i+1}$ in the successor relation in $\preceq$ 
    where $2 \le i < n$. The only nonzero entry in $v_{i}$ or $v_{i+1}$
    is in the column $v_1$, therefore $P_{\mx M}(v_i, v_{i+1})\le1$. Furthermore,
    $P_{\mx M}(v_i, v_{i+1})\ge1$ since contraction of rows $v_i$ and $v_{i+1}$ results
    in a red entry in the column $v_1$.
    Furthermore, this property is preserved by any number of contraction of
    rows $v_j$ and $v_{j+1}$ (and the corresponding columns) such that $2 \le i < n$.

    Therefore, an application of our algorithm can result in a contraction sequence
    beginning by contracting rows $v_{2i}$ and $v_{2i+1}$ (and the corresponding
    columns) for every $1 \le i \le \frac{n-1}{2}$.
    After these contractions are performed, $v_1$ has red entry in every
    column except for column $v_1$. Since there is $\ell+1$ other columns,
    such sequence satisfies requirements of (b)(ii).

    We know from (a) that any symmetric contraction sequence contains only matrices
    $\mx M'$ with red potential $3\cdot |V(\mx M')|$. Hence there is 
    a pair of consecutive rows $u, v$ such that $P_{\mx M'}(u, v) \le 3 \le k$, and
    the algorithm can extend the partial sequence to a full contraction sequence by
    choosing rows satisfying (b)(i).
\qed\end{proof}

Finally, we show that even an optimal twin-ordering of $\mx M$ might not be a suitable ordering for applying 
the red potential method on $\mx M$, again when we consider the additional requirement $R = V(\mx M)$.

\begin{proposition}
    For every $n \ge 3$, there exists a cograph $G_n$ on $n$ vertices
    and $1$-twin-ordering $\preceq$ of its adjacency matrix such that
    the red potential
    of $V(G)$ in $G_n$ with respect to $\preceq$ is at least
    $\frac{(n - 1)(n-2)}{2}$.
\end{proposition}
\begin{proof}
    Let $G_1:=K_1$, let $G_{2i}:=\overline{\overline{G_{2i-1}} \cupdot K_1}$,
    and let $G_{2i+1}:=G_{2i} \cupdot K_1$ for all $i \ge 1$, where $\bar H$ denotes the graph complement. Let us fix arbitrary $n \ge 3$, and let us number the vertices of $G_n$ in the order they were added (i.e., $v_i$ is present in $G_i$ but not in $G_{i-1}$).
    Consider the adjacency matrix of $G_n$ ordered by $\preceq$.

    Observe that the contraction sequence obtained by
    contracting $v_1$ with $v_2, v_3,\\ \ldots, v_n$ (in this order)
    creates no red entries except for self-loops.
    Hence the ordering $v_i \preceq v_j \iff i \le j$ is a $1$-twin-ordering.
    Note that there is no $0$-twin-ordering of $G_n$ since any
    contraction sequence must eventually contract a pair of vertices
    connected by an edge creating red self-loop.
 
    Notice that, for all $1 \le i < n$, contraction of rows $v_i$ and 
    $v_{i+1}$ results in a row containing red entry in all
    columns $v_j$ where $j < i$.
    Hence the red potential of $V(G_n)$ in $G_n$ is at least
    $\sum_{i = 1}^{n - 1}i-1 = \frac{(n - 1)(n - 2)}{2}$.
\qed\end{proof}

\section{Conclusions}\label{sec:conclu}

We have primarily studied bounded twin-width of certain graph classes which widely generalize proper interval graphs,
and have provided a straightforward procedures for constructing witnessing contraction sequences for them
assuming a suitable input representation of the graphs.
This study has been inspired by the fact that posets of bounded width are of bounded twin-width, and
that one can derive boundedness of twin-with of some simple generalizations of proper interval graphs (such as of
$k$-fold proper interval graphs \cite{DBLP:conf/focs/GajarskyHLOORS15}) from the former.
Our results in Section~\ref{sec:trans} can thus be seen as setting the limits of how far (in the class of graphs)
can posets of bounded width ``certify'' bounded twin-width.

Regarding the previous, we remark that it is considered very likely that the classes of graphs of bounded twin-width are not transductions of the classes of posets of bounded width
(although we are not aware of a published proof of this).
We think that the proper $k$-mixed-thin graph classes are, in the ``FO transduction hierarchy'', positioned strictly between the classes of posets of bounded width and the classes of bounded twin-width,
meaning that they are not transductions of posets of bounded width and they do not transduce all graphs of bounded twin-width.
We plan to further investigate this question.

Furthermore, Bonnet et al.~\cite{DBLP:journals/corr/abs-2102-06880} proved that the classes of structures of bounded twin-width are transduction-equivalent to the classes of permutations with a forbidden pattern.
It would be very nice to find an analogous asymptotic characterization with permutations replaced by the graphs of some natural graph property.
As a step forward, we would like to further generalize proper $k$-mixed-thin graphs while keeping the property of bounded twin-width.

\bibliography{k-mixed-thin}

\newpage

\end{document}